\title{Linear Canonical Stockwell Transform and the associated Multiresolution Analysis}
\author{Bivek Gupta$^a$\thanks{$^a$bivekgupta040792@gmail.com}, Amit K. Verma$^b$\thanks{$^b$akverma@iitp.ac.in}, \\\small{\it{$^{a,b}$ Department of Mathematics,}} \\\small{\it{Indian Institute of Technology Patna,}}\\\small{\it{ Bihta, Patna 801103, (BR) India.}}}
\theoremstyle{definition}
\newtheorem{definition}{Definition}[section]
\newtheorem{proposition}{Proposition}[section]
\newtheorem{theorem}{Theorem}[section]
\newtheorem{corollary}{Corollary}[section]
\newtheorem{remark}{Remark}[section]
\begin{document}
\maketitle
\begin{abstract}
In this article, we give a new definition of the linear canonical Stockwell transform (LCST) and study its basic properties along with the inner product relation, reconstruction formula and also characterize the range of the transform and show that its range is the reproducing kernel Hilbert space. We also develop a multiresolution analysis (MRA) associated with the proposed transform together with the construction of the orthonormal basis for $L^2(\mathbb{R}).$
\end{abstract}
{\textit{Keywords}:}
Linear canonical transform; Linear canonical Stockwell transform; Multiresolution analysis; Orthonormal basis \\
{\textit{AMS Subject Classification}: 42C40, 46E30, 44A35, 42C15, 42A38, 47G10, 44A15, 94A12}
\section{Introduction}
Although the wavelet transform overcomes the limitation of the short time Fourier transform (STFT) and has served as the powerful tool for signal processing, it lacks phase information. Stockwell et al. \cite{stockwell1996localization},\cite{stockwell2007basis},\cite{du2007continuous} proposed a hybrid integral transform by combining the advantages of both the STFT and the wavelet transform, to overcome this limitation, called the  Stockwell transform which is defined for $f\in L^2(\mathbb{R})$ and a given window function $\psi\in L^2(\mathbb{R})$ as 
$$\left(\mathcal{S}_\psi f \right)(a,b)=\frac{1}{\sqrt{2\pi}}\int_{\mathbb{R}}f(t)\overline{\psi_{a,b}(t)}dt,~a\in\mathbb{R}^+,~b\in\mathbb{R},$$ where
$\psi_{a,b}(t)=\mathcal{M}_a\tau_b\mathcal{D}_{a}\psi(t)=ae^{iat}\psi(a(t-b)),$ 
 $(\mathcal{D}_{a}\psi)(t)=a\psi(at),$
$(\mathcal{M}_a\psi)(t)=e^{iat}\psi(t),$
 $(\tau_b\psi)(t)=\psi(t-b).$ 
Equivalently, it can be written as 
$$\left(\mathcal{S}_\psi f \right)(a,b)=\frac{1}{\sqrt{2\pi}}[(\mathcal{M}_{-a}f)\star(D_a\tilde{\psi})](b),$$\\
where $\tilde{\psi}(t)=\overline{\psi}(-t)$ and $\star$ denotes the classical convolution defined by
$$(f\star g)(t)=\int_{\mathbb{R}}f(x)g(t-x)dx.$$

Even though the Stockwell transform
is a promising tool in the analysis of the non-stationary signals, it is incompetent for the analysis of those signals whose energy is not well concentrated in the time-frequency plane, for example the chirp like signal which is ubiquitous in nature \cite{dai2017new}. 

As a generalization of Fourier transform (FT)\cite{debnath2014integral} and fractional Fourier transform (FrFT) \cite{almeida1994fractional,verma2022note}, linear canonical transform (LCT) is a four parameter family of linear integral transform proposed by Mohinsky and Quesne \cite{moshinsky1971linear} and is considered as the important tool for non-stationary signal processing. Because of the extra degrees of freedom, as compared to the FT and FrFT, its application can be found in a number of fields including signal separation \cite{sharma2006signal}, signal reconstruction \cite{wei2014reconstruction}, filter designing \cite{barshan1997optimal} and many more. For more detail on LCT and its application we refer the reader to \cite{healy2015linear}. 

For the signal whose energy is not well concentrated in the frequency domain, LCT is an appropriate tool. However, because of its global kernel it is inadequate to indicate the time localization of the LCT spectral components, and thus it is improper for processing the signals whose LCT spectral features varies with time. The window linear canonical transform (WLCT)\cite{kou2012windowed} and linear canonical wavelet transform (LCWT)\cite{gupta2022linear}
are thus proposed to address this issue. 


Replacing the classical convolution $\star$ by the fractional convolution $\star_{\alpha},$ i.e.,
$$(f\star_{\alpha}g)(x)=\int_{\mathbb{R}}f(y)g(x-y)e^{\frac{i}{2}(y^2-x^2)\cot\alpha}dy,$$
Srivastava et al.\cite{srivastava2020family} introduced the generalized Stockwell transform and studied its basis properties and the uncertainty principles associated with it. Further replacing the classical convolution by the linear canonical convolution \cite{deng2006convolution} $\star_{A},$ i.e.,
$$(f\star_{M}g)(x)=\int_{\mathbb{R}}f(y)g(x-y)e^{-\frac{iAy(x-y)}{B}}dy,$$ where $M$ is the matrix parameter $(A,B;C,D),$ such that $B\neq 0$ and $AD-BC=1,$
Shah et al.\cite{shah2020linear} defined the linear canonical Stockwell transform (LCST). They studied the orthogonality relation, inversion formula and the range theorem. The discrete version of this transform along with its reconstruction formula is also obtained. They discussed the time-LCT domain frequency resolution and the constant $Q-$property associated with the LCST.

Multiresolution analysis (MRA) has become an important tool for the construction of a discrete wavelet system  i.e., orthonormal basis for $L^2(\mathbb{R}).$ It was first formulated by Mallet \cite{mallat1989multiresolution}. Some of the important wavelets that have been obtained from the MRA are Haar wavelet, Shannon wavelet, Meyer wavelet, Daubechies wavelet, etc. \cite{daubechies1992ten,debnath2002wavelet}. Not all wavelets are derived via MRA, there are some wavelets, like Mexican hat wavelet and Morlet wavelet, that does not come from an MRA (see\cite{pereyra2012harmonic}). Shi et al.\cite{shi2015multiresolution} developed the MRA associated to the FrWT \cite{shi2012novel} and constructed some fractional wavelets. Motivated by \cite{hernandez1996first}, Ahmad et al. \cite{ahmad2021fractional} proved that the conditions in the definition of fractional MRA \cite{shi2015multiresolution} are not independent and also they characterized the scaling function associated with the fractional MRA. Dai et al.\cite{dai2017new} introduced a novel FrWT and studied the  associated MRA followed by the construction of orthogonal wavelets. Guo et al.\cite{guo2021novel} introduced a novel FrWT with one more extra degree of freedom as compared to the one studied in \cite{dai2017new} and studied the associated MRA followed by the construction of orthogonal wavelets. Recently, Shah et al. \cite{shah2022special} introduced the special affine wavelet transform based on the special affine convolution given in \cite{xiang2014convolution}. They introduced the special affine MRA and constructed some discrete orthonormal special affine wavelets.

So far in the literature, we have not seen any paper on the MRA that is associated with the Stockwell transform \cite{du2007continuous}, fractional Stockwell transform \cite{srivastava2020family} or linear canonical Stockwell transform \cite{shah2020linear}. This paper deals with the novel way of defining an MRA followed by the construction of orthonormal  basis from the newly defined MRA.

The main objectives of this paper are 
(i) to introduce a  new time-frequency analysing tool and study some of its basic properties along with the inner product relation, reproducing formula and the characterization of the range of the LCST. Our definition is more general than the one given in \cite{shah2020linear},
(ii) to study the time frequency analysis and the associated constant $Q-$factor,
(iii) to define the MRA associated with the novel LCST and construct orthonormal basis for $L^2(\mathbb{R}).$

The paper is arranged as follows. In section 2, we recall the definition of LCT and some of its properties. In section 3, we define novel LCST and study some of its basic properties including inner product relation, reconstruction formula and also characterize the range of the LCST. The MRA associated with the LCST followed by the construction of orthonormal basis for $L^2(\mathbb{R})$ are discussed in  section 4. Finally, in section 5, we conclude our paper.

\section{Preliminaries}
\subsection{Linear Canonical Transform}
We briefly recall the definition of LCT and its important properties that will be used in this paper.
\begin{definition}
The LCT of $f\in L^2(\mathbb{R})$, with respect to a matrix parameter $$M=
\begin{bmatrix}
A & B\\
C & D
\end{bmatrix}, AD-BC=1$$
is defined as 
$$(\mathcal{L}^{M}f)(\xi)=\begin{cases}
\displaystyle\int_{\mathbb{R}}f(t)K_{M}(t,\xi)d t, B\neq 0,\\
\sqrt{D}e^{\frac{i}{2}CD\xi^2}F(D\xi),~B=0,
\end{cases}$$
where the kernel $K_{M}(t,\xi)$ is given by
\begin{equation}\label{P4KernelLCT}
K_{M}(t,\xi)=
 \frac{1}{\sqrt{2\pi iB}} e^{\frac{i}{2}\left(\frac{A}{B}t^2-\frac{2}{B}\xi t+\frac{D}{B}\xi^2\right)},~\xi\in\mathbb{R}.
\end{equation}
\end{definition}
Among several important properties of the LCT, the important among them that will be used in this paper, is the Parseval's formula
\begin{equation}\label{P4ParsevalLCT}
\int_{\mathbb{R}}f(t)\overline{g(t)}dt=\int_{\mathbb{R}}\left(\mathcal{L}^Mf\right)(\xi)\overline{\left(\mathcal{L}^Mg\right)(\xi)}d\xi,~\mbox{where}~f,~g\in L^2(\mathbb{R}).
\end{equation}\label{P4PlancherelLCT}
Particularly, if $f=g,$ then we have the Plancherel's formula for the LCT
\begin{equation}\|f\|_{L^2(\mathbb{R})}=\left\|\mathcal{L}^Mf\right\|_{L^2(\mathbb{R})}.
\end{equation}
The LCTs satisfies the additive property, i.e.,
\begin{equation}\label{P4SemigrouplLCT}
\mathcal{L}^M\mathcal{L}^Nf=\mathcal{L}^{MN}f,~\mbox{where} ~f\in L^2(\mathbb{R}),
\end{equation}
and the inversion property
\begin{equation}\label{P4InverseLCT}
\mathcal{L}^{M^{-1}}\left(\mathcal{L}^{M}f\right)=f,
\end{equation}
where, $M^{-1}$ denotes the inverse of $M.$
\section{Linear Canonical Stockwell Transform}
We propose a new integral transform namely the LCST. We shall discuss some of its basic properties along with the inner product relation, reconstruction formula and also prove that the range of the transform is a reproducing kernel Hilbert space. For the notational convenience, we write the matrix $M=
\begin{bmatrix}
A & B\\
C & D
\end{bmatrix}$ as $M=(A,B;C,D).$

\begin{definition}\label{P4DefinitionLCST}
Let $f\in L^2(\mathbb{R}),$ $M_{j}=(A_{j},B_{j};C_{j},D_{j}),~j=1,2$ be matrices with $A_{j}D_{j}-B_{j}C_{j}=1~\mbox{and}~B_{j}\neq 0,~j=1,2$ then the LCST of $f$  with respect to $M_{j}$ and $\psi\in L^2(\mathbb{R})$ is defined by
$$\left(S^{M_{1},M_{2}}_{\psi}f\right)(a,b)=e^{-\frac{iA_{1}}{2B_{1}}b^2}\left\{\tilde{f}(t)e^{\frac{iA_{1}}{2B_{1}}t^2}\star\overline{a\psi(-at)e^{\frac{iA_{2}}{2B_{2}}(at)^2}}\right\}(b),~a\in\mathbb{R^+},b\in\mathbb{R},$$
where $\tilde{f}(t)=e^{-i\frac{at}{B_{1}}}f(t)$ and $\star$ denotes the classical convolution.
\end{definition}
Equivalently, 
\begin{align*}
\left(S^{M_{1},M_{2}}_\psi f\right)(a,b)
&=e^{-\frac{iA_{1}}{2B_{1}}b^2}\int_{\mathbb{R}}e^{-i\frac{at}{B_{1}}}f(t)e^{\frac{iA_{1}}{2B_{1}}t^2}\overline{a\psi(-a(b-t))e^{\frac{iA_{2}}{2B_{2}}\left\{a(t-b)\right\}^2}}dt\\
&=\int_{\mathbb{R}}f(t)\overline{e^{-\frac{iA_{1}}{2B_{1}}(t^2-b^2)+\frac{iA_{2}}{2B_{2}}\left\{a(t-b)\right\}^2+\frac{iat}{B_{1}}}a\psi(a(t-b))}dt\\
&=\int_{\mathbb{R}}f(t)\overline{\psi^{M_{1},M_{2}}_{a,b}(t)}dt,
\end{align*}
where, with $\psi_{a,b}(t)=a\psi\left(a(t-b)\right),$ 
\begin{equation}\label{P4KernelLCST}
\psi^{M_{1},M_{2}}_{a,b}(t)=e^{-\frac{iA_{1}}{2B_{1}}(t^2-b^2)+\frac{iA_{2}}{2B_{2}}\left\{a(t-b)\right\}^2+\frac{iat}{B_{1}}}\psi_{a,b}(t).
\end{equation}
Thus we have an equivalent definition of the LCST  as 
\begin{equation}\label{P4EquivalentLCST}
\left(S^{M_{1},M_{2}}_{\psi}f\right)(a,b)=\left\langle f,\psi^{M_{1},M_{2}}_{a,b} \right\rangle_{L^2(\mathbb{R})}.
\end{equation}

It is to be noted that depending on the different choice of the matrix $M_1$ and $M_2$ we have different family of novel integral transforms:
\begin{itemize}
\item For $M_1=(\cos\alpha,\sin\alpha;-\sin\alpha,\cos\alpha),~M_2=(\cos\beta,\sin\beta;-\sin\beta,\cos\beta),~\alpha,\beta\neq n\pi~(n\in\mathbb{Z}),$ we obtain the novel fractional Stockwell transform (FrST)
\begin{align*}
\left(S^{\alpha,\beta}_\psi f\right)(a,b)=a\int_{\mathbb{R}}f(t)\overline{\psi\left(a(t-b)\right)}e^{\frac{i}{2}(t^2-b^2)\cot\alpha-\frac{i}{2}\left\{a(t-b)\right\}^2\cot\beta-iat\csc\alpha}dt.
\end{align*}
\item For $M_1=(1,B_1;0,1),~M_2=(1,B_2;0,1),~B_1,B_2\neq 0$ we obtain the novel Fresnel Stockwell transform
\begin{align*}
\left(S^{B_1,B_2}_\psi f\right)(a,b)=a\int_{\mathbb{R}}f(t)\overline{\psi\left(a(t-b)\right)}e^{\frac{iA_{1}}{2B_{1}}(t^2-b^2)-\frac{iA_{2}}{2B_{2}}\left\{a(t-b)\right\}^2-\frac{iat}{B_{1}}}dt.
\end{align*}
\item For $M_1=M_2=(0,1;-1,0)$ we obtain the classical Stockwell transform (ST)
\begin{align*}
\left(S_\psi f\right)(a,b)=a\int_{\mathbb{R}}f(t)\overline{\psi\left(a(t-b)\right)}e^{-iat}dt.
\end{align*}
\end{itemize}
We now establish a fundamental relation between the LCST and the LCT. This relation will be useful in obtaining the resolution of time and linear canonical spectrum in the time-LCT-frequency plane and inner product relation associated with the LCST. 
\begin{proposition}
If $S_{\psi}^{M_{1},M_{2}}f$ and $\mathcal{L}^{M_{2}}f$ are respectively the LCST and the LCT of $f\in L^2(\mathbb{R}).$ Then,
\begin{equation}\label{P4FundamentalRelationLCST-LCT}
\left(S_{\psi}^{M_{1},M_{2}}f\right)(a,b)=\overline{\sqrt{2\pi iB_{2}}}e^{-\frac{iab}{B_{1}}}\int_{\mathbb{R}}e^{\frac{iD_{2}}{2B_{2}}\left(\frac{B_{2}\xi}{B_{1}a}\right)^2}\left(\mathcal{L}^{M_{1}}f\right)(\xi)\overline{\left(\mathcal{L}^{M_{2}}\left\{e^{\frac{it}{B_{1}}}\psi(t)\right\}\right)\left(\frac{B_{2}\xi}{B_{1}a}\right)}K_{M_{1}^{-1}}(b,\xi)d\xi.
\end{equation}
\end{proposition}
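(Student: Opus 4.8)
The plan is to exploit the inner-product form \eqref{P4EquivalentLCST} together with the LCT Parseval identity \eqref{P4ParsevalLCT}, letting the linear-canonical-side structure of the analyzing family $\psi^{M_{1},M_{2}}_{a,b}$ do the work. First I would apply \eqref{P4ParsevalLCT} with the matrix $M_{1}$ and $g=\psi^{M_{1},M_{2}}_{a,b}$ to rewrite
\[
\left(S_{\psi}^{M_{1},M_{2}}f\right)(a,b)=\left\langle f,\psi^{M_{1},M_{2}}_{a,b}\right\rangle_{L^2(\mathbb{R})}=\int_{\mathbb{R}}\left(\mathcal{L}^{M_{1}}f\right)(\xi)\,\overline{\left(\mathcal{L}^{M_{1}}\psi^{M_{1},M_{2}}_{a,b}\right)(\xi)}\,d\xi .
\]
This already produces the factor $\left(\mathcal{L}^{M_{1}}f\right)(\xi)$ that appears in \eqref{P4FundamentalRelationLCST-LCT} and reduces the whole proposition to an explicit evaluation of $\mathcal{L}^{M_{1}}\psi^{M_{1},M_{2}}_{a,b}$ followed by complex conjugation.

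For that evaluation I would insert the definition \eqref{P4KernelLCST} of $\psi^{M_{1},M_{2}}_{a,b}$ into $\left(\mathcal{L}^{M_{1}}\psi^{M_{1},M_{2}}_{a,b}\right)(\xi)=\int_{\mathbb{R}}\psi^{M_{1},M_{2}}_{a,b}(t)K_{M_{1}}(t,\xi)\,dt$ with the kernel \eqref{P4KernelLCT}. The crucial simplification is that the chirp $e^{-\frac{iA_{1}}{2B_{1}}t^2}$ carried by $\psi^{M_{1},M_{2}}_{a,b}$ exactly cancels the $e^{\frac{iA_{1}}{2B_{1}}t^2}$ in $K_{M_{1}}(t,\xi)$, leaving only a $b^2$ phase together with the $M_{2}$-chirp and the linear terms $\frac{iat}{B_{1}}$ and $-\frac{i\xi t}{B_{1}}$ multiplying $a\psi(a(t-b))$. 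I would then change variables $u=a(t-b)$, so that the prefactor $a$ cancels the Jacobian $\tfrac{1}{a}$ and the $b,\xi$-dependence decouples from the integration variable.

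After the substitution the surviving $u$-integral is, up to the constant $\sqrt{2\pi iB_{2}}$ and a factor $e^{-\frac{iD_{2}}{2B_{2}}\eta^{2}}$, precisely $\left(\mathcal{L}^{M_{2}}\{e^{\frac{it}{B_{1}}}\psi(t)\}\right)(\eta)$ evaluated at $\eta=\frac{B_{2}\xi}{B_{1}a}$; matching the linear term $-\frac{i\xi u}{aB_{1}}$ in the exponent against the $M_{2}$-kernel's linear term $-\frac{i\eta u}{B_{2}}$ is exactly what forces the scaled argument $\frac{B_{2}\xi}{B_{1}a}$, while the modulation $e^{\frac{iu}{B_{1}}}$ supplies the factor $e^{\frac{it}{B_{1}}}\psi(t)$. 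Collecting the remaining quadratic and linear phases in $(b,\xi)$, I would recognize $\frac{1}{\sqrt{2\pi iB_{1}}}e^{\frac{iA_{1}}{2B_{1}}b^{2}-\frac{i\xi b}{B_{1}}+\frac{iD_{1}}{2B_{1}}\xi^{2}}$ as a copy of the LCT kernel $K_{M_{1}}(b,\xi)$, and invoke the standard relation between the conjugate of an LCT kernel and the kernel of the inverse transform to identify its conjugate with $K_{M_{1}^{-1}}(b,\xi)$. Conjugating everything then converts $\sqrt{2\pi iB_{2}}$, $e^{-\frac{iD_{2}}{2B_{2}}\eta^{2}}$, the residual $e^{\frac{iab}{B_{1}}}$, and $\left(\mathcal{L}^{M_{2}}\{\cdots\}\right)(\eta)$ into their barred counterparts, reproducing \eqref{P4FundamentalRelationLCST-LCT}.

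The main obstacle is purely the phase bookkeeping in the middle step: one must carry the quadratic blocks in $\frac{A_{1}}{B_{1}}$, $\frac{A_{2}}{B_{2}}$, $\frac{D_{1}}{B_{1}}$, $\frac{D_{2}}{B_{2}}$ and the linear terms $\frac{at}{B_{1}}$ and $\frac{\xi t}{B_{1}}$ through the substitution $t=\frac{u}{a}+b$ without dropping the $u$-independent remainder $e^{\frac{iab}{B_{1}}-\frac{i\xi b}{B_{1}}}$, which later splits between the kernel $K_{M_{1}^{-1}}$ and the modulation factor $e^{-\frac{iab}{B_{1}}}$. Once the cancellation of the $\frac{A_{1}}{2B_{1}}t^{2}$ phase is noticed, the scaling of the LCT argument to $\frac{B_{2}\xi}{B_{1}a}$ and the recognition of the $M_{1}$-kernel are forced, so the remainder of the argument is essentially mechanical.
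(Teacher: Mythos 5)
Your proposal is correct and follows essentially the same route as the paper's own proof: Parseval's identity for the LCT reduces the claim to an explicit computation of $\mathcal{L}^{M_{1}}\psi^{M_{1},M_{2}}_{a,b}$, which you carry out via the same chirp cancellation, change of variables $u=a(t-b)$, and recognition of the scaled $M_{2}$-kernel and the residual $K_{M_{1}}(b,\xi)$ factor. The only cosmetic difference is the order of operations (you invoke Parseval first, the paper computes the window's LCT first), which changes nothing of substance.
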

\begin{proof}
Form the definition of the LCT and $\psi_{a,b}^{M_{1},M_{2}},$ we have
\begin{align*}
\left(\mathcal{L}^{M_{1}}\psi_{a,b}^{M_{1},M_{2}}\right)(\xi)
&=\int_{\mathbb{R}}a\psi(a(t-b))e^{-\frac{iA_{1}}{2B_{1}}(t^2-b^2)+\frac{iA_{2}}{2B_{2}}\left\{a(t-b)\right\}^2+\frac{iat}{B_{1}}}\frac{1}{\sqrt{2\pi iB_{1}}}e^{\frac{i}{2}\left(\frac{A_{1}}{B_{1}}t^2-\frac{2}{B_{1}}\xi t+\frac{D_{1}}{B_{1}}\xi^2\right)}dt\\
&=\frac{1}{\sqrt{2\pi iB_{1}}} \int_{\mathbb{R}}a\psi(at)e^{\frac{iA_{1}}{2B_{1}}b^2+\frac{iA_{2}}{2B_{2}}(at)^2+\frac{ia(t+b)}{B_{1}}}e^{\frac{i}{2}\left\{-\frac{2}{B_{1}}\xi (t+b)+\frac{D_{1}}{B_{1}}\xi^2\right\}}dt\\
&=\int_{\mathbb{R}}a\psi(at)e^{\frac{iA_{2}}{2B_{2}}(at)^2+\frac{ia(t+b)}{B_{1}}}e^{\frac{i}{2}\left(-\frac{2}{B_{1}}\xi t\right)}K_{M_{1}}(b,\xi)dt\\
&=K_{M_{1}}(b,\xi)\int_{\mathbb{R}}\psi(t)e^{\frac{iA_{2}}{2B_{2}}t^2+\frac{i(t+ab)}{B_{1}}}e^{\frac{i}{2}\left\{-\frac{2}{B_{1}}\left(\frac{\xi}{a}\right)t\right\}}dt\\
&=e^{\frac{iab}{B_{1}}-\frac{iD_{2}}{2B_{2}}\left(\frac{B_{2}\xi}{B_{1}a}\right)^2}K_{M_{1}}(b,\xi)\int_{\mathbb{R}}e^{\frac{it}{B_{1}}}\psi(t)\sqrt{2\pi iB_{2}}K_{M_{2}}\left(t,\frac{B_{2}\xi}{B_{1}a}\right)dt.
\end{align*}
Therefore, we have
\begin{equation}\label{P4LCT-Kernel-LCST}
\left(\mathcal{L}^{M_{1}}\psi_{a,b}^{M_{1},M_{2}}\right)(\xi)=\sqrt{2\pi iB_{2}}e^{\frac{iab}{B_{1}}-\frac{iD_{2}}{2B_{2}}\left(\frac{B_{2}\xi}{B_{1}a}\right)^2}K_{M_{1}}(b,\xi)\left(\mathcal{L}^{M_{2}}\left\{e^{\frac{it}{B_{1}}}\psi(t)\right\}\right)\left(\frac{B_{2}\xi}{B_{1}a}\right).
\end{equation}
Using \eqref{P4ParsevalLCT} in \eqref{P4EquivalentLCST}, we get 
$$\left(S_{\psi}^{M_{1},M_{2}}f\right)(a,b)=\left\langle \mathcal{L}^{M_{1}}f,\mathcal{L}^{M_{1}}\left(\psi_{a,b}^{M_{1},M_{2}}\right)\right\rangle_{L^2(\mathbb{R})}.$$
Using \eqref{P4LCT-Kernel-LCST}, we have
\begin{equation*}
\left(S_{\psi}^{M_{1},M_{2}}f\right)(a,b)=\overline{\sqrt{2\pi iB_{2}}}e^{-\frac{iab}{B_{1}}}\int_{\mathbb{R}}e^{\frac{iD_{2}}{2B_{2}}\left(\frac{B_{2}\xi}{B_{1}a}\right)^2}\left(\mathcal{L}^{M_{1}}f\right)(\xi)\overline{\left(\mathcal{L}^{M_{2}}\left\{e^{\frac{it}{B_{1}}}\psi(t)\right\}\right)\left(\frac{B_{2}\xi}{B_{1}a}\right)}K_{M_{1}^{-1}}(b,\xi)d\xi.
\end{equation*}

This completes the proof.
\end{proof}
\subsection{Time-LCT frequency analysis}
From the equation
$$\left(S^{M_{1},M_{2}}_\psi f\right)(a,b)=\int_\mathbb{R}f(t)\overline{\psi^{M_{1},M_{2}}_{a,b}(t)}dt,$$ it follows that if $\psi^{M_{1},M_{2}}_{a,b}$ is supported in the time domain, then so is $\left(S^{M_{1},M_{2}}_\psi f\right)(a,b).$

Also, from equation \eqref{P4FundamentalRelationLCST-LCT}, 
it follows that the LCST can provide the local property of $f(t)$ in the linear canonical domain. Thus the LCST is capable of producing the time and linear canonical domain spectral information simultaneously and represents the signal in the time-LCT frequency domain. More precisely, suppose that $E_\psi$ and $\Delta_{\psi}$ are respectively the center and radius of the window function $\psi$. Then the center and radius of $\psi^{M_{1},M_{2}}_{a,b}$ are given respectively by

$$E\left[\psi^{M_{1},M_{2}}_{a,b}\right]=\frac{1}{a}E_\psi +b,$$ 
and
$$\Delta\left[\psi^{M_{1},M_{2}}_{a,b}\right]=\frac{1}{a}\Delta_\psi.$$

Let $\Psi(\xi)=\left(\mathcal{L}^{M_{2}}\left\{e^{\frac{it}{B_{1}}}\psi(t)\right\}\right)(\xi)$ be a window function with $E_{\Psi}$ and $\Delta_{\Psi}$ as the center and radius respectively in the linear canonical domain . Then for $B_1B_2>0,$ the  center of $\left(\mathcal{L}^{M_{2}}\left\{e^{\frac{it}{B_{1}}}\psi(t)\right\}\right)\left(\frac{B_{2}\xi}{B_{1}a}\right)$ is
$$E\left[\left(\mathcal{L}^{M_{2}}\left\{e^{\frac{it}{B_{1}}}\psi(t)\right\}\right)\left(\frac{B_{2}\xi}{B_{1}a}\right)\right]=\frac{B_{1}a}{B_{2}}E_{\Psi},$$
whereas the radius is
$$\Delta\left[\left(\mathcal{L}^{M_{2}}\left\{e^{\frac{it}{B_{1}}}\psi(t)\right\}\right)\left(\frac{B_{2}\xi}{B_{1}a}\right)\right]=\frac{B_{1}a}{B_{2}}\Delta_{\Psi}.$$
Consequently, the window function's $Q$-factor in the linear canonical domain is
$$Q=\frac{\Delta_{\Psi}}{E_{\Psi}},$$
which, for the provided matrices $M_1$ and $M_2,$  is independent of the dilation parameter $a.$ This is called the constant $Q-$property of the LCST.

\subsection{Time-LCT frequency resolution}
The LCST $\left(S_\psi^{M_{1},.M_{2}} f\right)(a,b)$ of $f$ localizes the signal with the time window 
$$\left[\frac{1}{a}E_\psi+b-\frac{1}{a}\Delta_
\psi,\frac{1}{a}E_\psi+b+\frac{1}{a}\Delta_
\psi\right].$$
Similarly, we get that the LCST gives localized information of the linear canonical spectrum of $f$ in the window
$$\left[\frac{B_{1}a}{B_{2}}E_{\Psi}-\frac{B_{1}a}{B_{2}}\Delta_{\Psi},\frac{B_{1}a}{B_{2}}E_{\Psi}+\frac{B_{1}a}{B_{2}}\Delta_{\Psi}\right].$$
Thus, the joint resolution of the LCST in the time and  linear canonical domain is given by a window 
\begin{align}\label{P4Window}
\left[\frac{1}{a}E_\psi+b-\frac{1}{a}\Delta_
\psi,\frac{1}{a}E_\psi+b+\frac{1}{a}\Delta_
\psi\right]\times\left[\frac{B_{1}a}{B_{2}}E_{\Psi}-\frac{B_{1}a}{B_{2}}\Delta_{\Psi},\frac{B_{1}a}{B_{2}}E_{\Psi}+\frac{B_{1}a}{B_{2}}\Delta_{\Psi}\right],
\end{align}
with constant window area $4\frac{B_{1}}{B_{2}}\Delta_{\psi}\Delta_{\Psi}$ in the time-LCT-frequency plane.

Thus, it follows that for a given parameter $M_1,M_2$ the area of \eqref{P4Window} depends on the admissible window $\psi$ and is independent of the parameters $a$ and $b.$ It is to be noted that the window gets narrower for large value of $a$ and wider for small value of $a.$ Thus the window given by the transform is flexible and hence, it is capable of providing the time linear canonical domain information  simultaneously.

We now present some basic properties of LCST.
\begin{theorem}
Let $f,g\in L^2(\mathbb{R}),$  $\alpha,\beta\in\mathbb{C}$, $\lambda> 0$ and $y\in \mathbb{R}.$ Then
\begin{enumerate}
\item $S^{M_{1},M_{2}}_{\psi}(\alpha f+\beta g)=\alpha S^{M_{1},M_{2}}_{\psi}f+\beta S^{M_{1},M_{2}}_{\psi}g$
\item $S^{M_{1},M_{2}}_{\alpha \psi+\beta\phi}f=\bar\alpha S^{M_{1},M_{2}}_{\psi}f+\bar\beta S^{M_{1},M_{2}}_{\phi}g$
\item $\left(S^{M_{1},M_{2}}_{\psi}\delta_{\lambda} f\right)(a,b)=\left(S^{\tilde{M_{1}},M_{2}}_{\psi}f\right)\left(\frac{a}{\lambda},b\lambda\right),~\mbox{where}~ (\delta_\lambda f)(t)=f(\lambda t),~\tilde{M_{1}}=\left(\frac{A_{1}}{\lambda^2},B_{1};C_{1},D_{1}\lambda^2\right)$
\item $\left(S^{M_{1},M_{2}}_{\psi}\tau_{y} f\right)(a,b)=e^{-\frac{iay}{B_{1}}-\frac{iA_{1}}{B_{1}}y(b-y)}\left(S^{\tilde{M_{1}},M_{2}}_{\psi} \left\{e^{\frac{iA_{1}}{B_{1}}yt}f(t)\right\}\right)(a,b-y),~\mbox{where}~ (\tau_{y}f)(t)=f(t-y)$.
\end{enumerate}
\end{theorem}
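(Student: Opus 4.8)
The plan is to verify each of the four identities directly from the kernel-based definition of the LCST, namely $\left(S^{M_{1},M_{2}}_{\psi}f\right)(a,b)=\int_{\mathbb{R}}f(t)\overline{\psi^{M_{1},M_{2}}_{a,b}(t)}\,dt$ together with the explicit formula \eqref{P4KernelLCST} for the window $\psi^{M_{1},M_{2}}_{a,b}$. Properties (1) and (2) are immediate: linearity in $f$ follows since $f\mapsto\int f\,\overline{\psi^{M_1,M_2}_{a,b}}$ is manifestly linear, and the conjugate-linearity in the window (2) follows because $\psi$ enters through its complex conjugate, so $\alpha\psi+\beta\phi$ produces the factors $\bar\alpha$ and $\bar\beta$. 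These two require no real computation beyond observing that the phase factor in \eqref{P4KernelLCST} does not depend on the choice of $\psi$.

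For (3) and (4) I would substitute the scaled or shifted argument into the integral and perform a single change of variables, then reorganize the resulting exponential phases to match the right-hand side. For the dilation property (3), I would start from $\left(S^{M_{1},M_{2}}_{\psi}\delta_\lambda f\right)(a,b)=\int_{\mathbb{R}}f(\lambda t)\overline{\psi^{M_1,M_2}_{a,b}(t)}\,dt$, substitute $u=\lambda t$, and track how the three phase terms $-\frac{iA_1}{2B_1}(t^2-b^2)$, $\frac{iA_2}{2B_2}\{a(t-b)\}^2$, and $\frac{iat}{B_1}$ transform. The factor $\lambda^2$ appearing in the $A_1$-term and $D_1$-term is precisely what forces the modified matrix $\tilde M_1=(A_1/\lambda^2,B_1;C_1,D_1\lambda^2)$; I would check en route that $\tilde M_1$ still has determinant one, which holds since the $A_1D_1$ product is preserved. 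The net effect should be a relabeling $a\mapsto a/\lambda$, $b\mapsto b\lambda$, matching the claim.

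For the translation property (4), I would write $\left(S^{M_{1},M_{2}}_{\psi}\tau_y f\right)(a,b)=\int_{\mathbb{R}}f(t-y)\overline{\psi^{M_1,M_2}_{a,b}(t)}\,dt$ and substitute $s=t-y$. The dilation argument $a(t-b)=a(s-(b-y))$ shows the $M_2$-phase becomes the $M_2$-phase at shifted position $b-y$, which explains why $b-y$ appears on the right. The $M_1$-quadratic phase $-\frac{iA_1}{2B_1}(t^2-b^2)$ under $t=s+y$ expands to produce cross terms $-\frac{iA_1}{B_1}y s$ and $-\frac{iA_1}{2B_1}(2yb-y^2)$ type contributions; the linear-in-$s$ piece is exactly absorbed into the modulation $e^{\frac{iA_1}{B_1}yt}f(t)$ inside the transformed LCST, while the $s$-independent pieces combine with the $\frac{iat}{B_1}$ term to give the prefactor $e^{-\frac{iay}{B_1}-\frac{iA_1}{B_1}y(b-y)}$. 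The main obstacle will be this bookkeeping in (4): one must carefully separate, after the substitution, which exponential factors depend on the integration variable $s$ (and hence must be reinterpreted as a modulation of $f$ or folded into the shifted window $\psi^{\tilde M_1,M_2}_{a,b-y}$) versus which are constants in $s$ (and pull out as the scalar prefactor). Getting the split right, and confirming that the variable-dependent remainder reassembles exactly into $\psi^{\tilde M_1,M_2}_{a,b-y}(s)$ with the same modified matrix $\tilde M_1$ as in part (3), is where the computation demands the most attention.
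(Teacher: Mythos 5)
The paper offers no proof at all for this theorem---its ``proof'' reads, in full, ``For the brevity of the paper we omit their proofs''---so there is no argument of the authors' to compare yours against; your direct verification from the kernel formula \eqref{P4KernelLCST} is the natural (and essentially the only) route. Parts (1) and (2) are indeed immediate. Your plan for (3) is correct and the computation does close: substituting $u=\lambda t$ turns the phase $\frac{iA_1}{2B_1}\left(t^2-b^2\right)$ into $\frac{i(A_1/\lambda^2)}{2B_1}\left(u^2-(b\lambda)^2\right)$, turns $a(t-b)$ into $\frac{a}{\lambda}\left(u-b\lambda\right)$, and turns $\frac{at}{B_1}$ into $\frac{(a/\lambda)u}{B_1}$, while $a\,dt=\frac{a}{\lambda}\,du$ fixes the amplitude; this is exactly $\left(S^{\tilde{M_1},M_2}_\psi f\right)\left(\frac{a}{\lambda},b\lambda\right)$, with $D_1\lambda^2$ determined only by the requirement $\det\tilde{M_1}=1$, since $D_1$ never enters the kernel.

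One caution on (4): your computation is right, but your closing remark---that the $s$-dependent remainder reassembles into $\psi^{\tilde{M_1},M_2}_{a,b-y}$ ``with the same modified matrix $\tilde{M_1}$ as in part (3)''---cannot be taken literally, because part (3)'s $\tilde{M_1}$ depends on $\lambda$ and no $\lambda$ occurs in (4). Carrying out $s=t-y$ as you describe, the surviving $s$-dependent phase is
\begin{equation*}
\frac{iA_1}{2B_1}\left(s^2-(b-y)^2\right)-\frac{iA_2}{2B_2}\left\{a\left(s-(b-y)\right)\right\}^2-\frac{ias}{B_1},
\end{equation*}
together with the modulation $e^{\frac{iA_1}{B_1}ys}$; that is, the window is $\psi^{M_1,M_2}_{a,b-y}$ with the \emph{original} matrix $M_1$, and the $s$-independent terms pull out as $e^{-\frac{iay}{B_1}-\frac{iA_1}{B_1}y(b-y)}$ exactly as you say (one checks $-\frac{iA_1}{2B_1}(b-y)^2-\frac{iA_1}{B_1}y(b-y)=\frac{iA_1}{2B_1}\left(y^2-b^2\right)$, matching the left-hand side). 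So the $\tilde{M_1}$ appearing in the paper's statement of (4) should be read as $M_1$ (equivalently, $\tilde{M_1}$ with $\lambda=1$); it is the statement, not your computation, that is loose here, but if you followed your sentence literally and hunted for a genuinely modified matrix, the verification would not close.
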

\begin{proof}
For the brevity of the paper we omit their proofs.
\end{proof}

\begin{definition}
A window function $\psi\in L^2(\mathbb{R})$ is said to be admissible if $C_{\psi,M_{1},M_{2}},$ defined by
\begin{align}\label{P4AdmissibleCondition}
\displaystyle C_{\psi,M_{1},M_{2}}=\int_{\mathbb{R}^+}\left|\left(\mathcal{L}^{M_{2}}\left\{e^{\frac{it}{B_{1}a}}\psi(t)\right\}\right)\left(\frac{B_2\xi}{B_1a}\right)\right|^2\frac{da}{a}
\end{align}
is a constant independent of $\xi$ satisfying $0<C_{\psi,M_{1},M_{2}}<\infty.$
\end{definition}

\begin{theorem}(\textbf{Inner product relation for LCST}).\label{P4InnerProductRelation of LCST}
 Let $\psi$ be an admissible window and $f,g\in L^2(\mathbb{R}),$ then
\begin{equation}\label{P4InnerProductRelationLCST}
\int_{\mathbb{R}^+\times\mathbb{R}}\left(S^{M_{1},M_{2}}_{\psi}f\right)(a,b)\overline{\left(S^{M_{1},M_{2}}_{\psi}g\right)(a,b)}\frac{dadb}{a}=2\pi|B_{1}|C_{\psi,M_{1},M_{2}}\langle f,g\rangle_{L^2(\mathbb{R})},
\end{equation}
where $C_{\psi,M_{1},M_{2}},$ is given by \eqref{P4AdmissibleCondition}.
\end{theorem}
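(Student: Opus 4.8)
The plan is to leverage the fundamental relation \eqref{P4FundamentalRelationLCST-LCT}, which expresses $\left(S^{M_1,M_2}_\psi f\right)(a,b)$ as an $M_1^{-1}$-type linear canonical integral in the variable $b$, and then to apply Parseval's formula \eqref{P4ParsevalLCT} for the LCT twice: first in $b$ and afterwards in $\xi$. Writing $\Psi=\mathcal{L}^{M_2}\{e^{it/B_1}\psi(t)\}$ and
$$F_a(\xi)=e^{\frac{iD_2}{2B_2}\left(\frac{B_2\xi}{B_1a}\right)^2}\left(\mathcal{L}^{M_1}f\right)(\xi)\,\overline{\Psi\!\left(\tfrac{B_2\xi}{B_1a}\right)},$$
relation \eqref{P4FundamentalRelationLCST-LCT} reads $\left(S^{M_1,M_2}_\psi f\right)(a,b)=\overline{\sqrt{2\pi iB_2}}\,e^{-iab/B_1}\int_{\mathbb{R}}F_a(\xi)K_{M_1^{-1}}(b,\xi)\,d\xi$, and similarly for $g$ with $G_a$ in place of $F_a$.

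First I would fix $a$ and evaluate the inner integral $\int_{\mathbb{R}}\left(S^{M_1,M_2}_\psi f\right)(a,b)\overline{\left(S^{M_1,M_2}_\psi g\right)(a,b)}\,db$. The unimodular factors $e^{\mp iab/B_1}$ cancel and the two prefactors multiply to $\left|\sqrt{2\pi iB_2}\right|^2=2\pi|B_2|$. Recognizing $\int_{\mathbb{R}}F_a(\xi)K_{M_1^{-1}}(b,\xi)\,d\xi$ as the value at $b$ of the $M_1^{-1}$-LCT of $F_a$, the surviving $b$-integral is the $L^2$ inner product of $\mathcal{L}^{M_1^{-1}}F_a$ and $\mathcal{L}^{M_1^{-1}}G_a$, which Parseval's formula \eqref{P4ParsevalLCT} collapses to $\int_{\mathbb{R}}F_a(\xi)\overline{G_a(\xi)}\,d\xi$. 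In this integrand the chirps $e^{\pm\frac{iD_2}{2B_2}(\cdots)^2}$ cancel and $\overline{\Psi}\,\Psi=|\Psi|^2$, leaving $\left(\mathcal{L}^{M_1}f\right)(\xi)\overline{\left(\mathcal{L}^{M_1}g\right)(\xi)}\left|\Psi\!\left(\tfrac{B_2\xi}{B_1a}\right)\right|^2$.

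Next I would integrate in $a$ against $da/a$ and interchange the order of integration, so that the whole expression becomes $2\pi|B_2|\int_{\mathbb{R}}\left(\mathcal{L}^{M_1}f\right)(\xi)\overline{\left(\mathcal{L}^{M_1}g\right)(\xi)}\left(\int_{\mathbb{R}^+}\left|\Psi\!\left(\tfrac{B_2\xi}{B_1a}\right)\right|^2\frac{da}{a}\right)d\xi$. The key observation is that the inner $a$-integral is independent of $\xi$: the dilation $a\mapsto\lambda a$ leaves the Haar measure $da/a$ invariant, so rescaling $\xi$ amounts to a harmless rescaling of $a$, and the integral equals the admissibility constant $C_{\psi,M_1,M_2}$ of \eqref{P4AdmissibleCondition}. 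Pulling this constant out and applying Parseval \eqref{P4ParsevalLCT} once more, now in $\xi$ with matrix $M_1$, replaces $\int_{\mathbb{R}}\left(\mathcal{L}^{M_1}f\right)\overline{\left(\mathcal{L}^{M_1}g\right)}\,d\xi$ by $\langle f,g\rangle_{L^2(\mathbb{R})}$, which yields the asserted identity.

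I expect two points to require the most care. The first is the justification of Fubini's theorem, i.e. the absolute integrability of the resulting triple integral in $(a,b,\xi)$; this should follow from $f,g\in L^2(\mathbb{R})$, the admissibility of $\psi$, and the Plancherel identity for the LCT, but it must be verified rather than assumed, and the scale-invariance argument identifying the $a$-integral with $C_{\psi,M_1,M_2}$ presupposes that this integral is genuinely constant in $\xi$. The second, more delicate point is the bookkeeping of the multiplicative constant together with the consistency of the modulation factor: the computation above produces the prefactor $2\pi|B_2|$ and uses $\Psi=\mathcal{L}^{M_2}\{e^{it/B_1}\psi\}$ coming from \eqref{P4FundamentalRelationLCST-LCT}, whereas the admissibility constant \eqref{P4AdmissibleCondition} is written with the modulation $e^{it/(B_1a)}$; reconciling these, and hence matching the stated factor $2\pi|B_1|$, requires aligning the normalization of $C_{\psi,M_1,M_2}$ with \eqref{P4FundamentalRelationLCST-LCT}.
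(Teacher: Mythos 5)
Your proposal is correct and follows essentially the same route as the paper: substitute the fundamental relation \eqref{P4FundamentalRelationLCST-LCT} into the left-hand side, collapse the $b$-integral, pull the admissibility constant out of the $a$-integral, and finish with Parseval \eqref{P4ParsevalLCT}. The one methodological difference is the middle step: the paper collapses the $b$-integral via the distributional orthogonality $\int_{\mathbb{R}}\overline{K(b,\xi)}K(b,u)\,db=\delta(u-\xi)$ of the LCT kernels inside a triple integral, whereas you apply Parseval in $b$ for each fixed $a$; your version is the rigorous rendering of the same computation and is preferable, since it avoids manipulating a Dirac delta inside a Fubini interchange. Moreover, the two discrepancies you flag at the end are real, and they are the paper's, not yours: the paper's own proof silently replaces the modulation $e^{it/B_{1}}$ appearing in \eqref{P4FundamentalRelationLCST-LCT} by $e^{it/(B_{1}a)}$ (so as to match \eqref{P4AdmissibleCondition}), and writes the prefactor $2\pi|B_{1}|$ where its fundamental relation actually yields $\left|\sqrt{2\pi iB_{2}}\right|^{2}=2\pi|B_{2}|$; carried out consistently with \eqref{P4FundamentalRelationLCST-LCT}, the identity holds with constant $2\pi|B_{2}|$ and with the admissibility integral formed from $\Psi=\mathcal{L}^{M_{2}}\{e^{it/B_{1}}\psi\}$, exactly as your computation produces. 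One caveat on your scale-invariance remark: dilation invariance of $da/a$ only shows that the $a$-integral is constant separately on $\xi>0$ and on $\xi<0$ (the two constants can differ, since rescaling cannot change the sign of $\xi$), which is precisely why \eqref{P4AdmissibleCondition} postulates independence of $\xi$ as part of the definition of admissibility; so at that point you should cite the definition rather than rely on rescaling alone, as you yourself half-anticipate.
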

\begin{proof}
Using \eqref{P4FundamentalRelationLCST-LCT}, we get
\begin{align*}
\int_{\mathbb{R}^+\times\mathbb{R}}&\left(S^{M_{1},M_{2}}_{\psi}f\right)(a,b)\overline{\left(S^{M_{1},M_{2}}_{\psi}g\right)(a,b)}\frac{dadb}{a}\\
&=2\pi|B_{1}|\int_{\mathbb{R}^+\times\mathbb{R}}\int_{\mathbb{R}}e^{\frac{iD_{2}}{2B_{2}}\left( \frac{B_{2}\xi}{B_{1}a}\right)^2-\frac{iD_{2}}{2B_{2}}\left(\frac{B_{2}u}{B_{1}a}\right)^2}\left(\mathcal{L}^{M_{1}}f\right)(\xi)\overline{\left(\mathcal{L}^{M_{1}}g\right)(u)} \overline{\left(\mathcal{L}^{M_{2}}\left\{e^{\frac{it}{B_{1}a}}\psi(t)\right\}\right)\left(\frac{B_{2}\xi}{B_{1}a}\right)}\\
&\hspace{6cm}\times\left(\mathcal{L}^{M_{2}}\left\{e^{\frac{it}{B_{1}a}}\psi(t)\right\}\right)\left(\frac{B_{2}u}{B_{1}a}\right)\left(\int_{\mathbb{R}}\overline{K_{M_{2}}(b,\xi)}K_{M_{2}}(b,u)db\right)\frac{da}{a}d\xi du\\
&=2\pi|B_{1}|\int_{\mathbb{R}^+\times\mathbb{R}}\int_{\mathbb{R}}e^{\frac{iD_{2}}{2B_{2}}\left( \frac{B_{2}\xi}{B_{1}a}\right)^2-\frac{iD_{2}}{2B_{2}}\left(\frac{B_{2}u}{B_{1}a}\right)^2}\left(\mathcal{L}^{M_{1}}f\right)(\xi)\overline{\left(\mathcal{L}^{M_{1}}g\right)(u)} \overline{\left(\mathcal{L}^{M_{2}}\left\{e^{\frac{it}{B_{1}a}}\psi(t)\right\}\right)\left(\frac{B_{2}\xi}{B_{1}a}\right)}\\
&\hspace{9cm}\times\left(\mathcal{L}^{M_{2}}\left\{e^{\frac{it}{B_{1}a}}\psi(t)\right\}\right)\left(\frac{B_{2}u}{B_{1}a}\right)\delta(u-\xi)\frac{da}{a}d\xi du\\
&=2\pi|B_{1}|\int_{\mathbb{R}}\left(\mathcal{L}^{M_{1}}f\right)(\xi)\overline{\left(\mathcal{L}^{M_{1}}g\right)(\xi)}\left\{\int_{\mathbb{R}^+}\left|\left(\mathcal{L}^{M_{2}}\left\{e^{\frac{it}{B_{1}a}}\psi(t)\right\}\right)\left(\frac{B_{2}\xi}{B_{1}a}\right)\right|^2\frac{da}{a}\right\}d\xi\\
&=2\pi|B_{1}|C_{\psi,M_{1},M_{2}}\left\langle\mathcal{L}^{M_{1}}f,\mathcal{L}^{M_{1}}g \right\rangle_{L^2(\mathbb{R})}\\
&=2\pi|B_{1}|C_{\psi,M_{1},M_{2}}\langle f,g\rangle_{L^2(\mathbb{R})}
\end{align*}
\end{proof}
\begin{remark}
\begin{itemize}
(Plancherel's theorem for $S^{M_{1},M_{2}}_{\psi}$) Replacing $f=g$ in above equation we have the Plancherel's theorem for $S^{M_{1},M_{2}}_{\psi}$ given by
\begin{equation}\label{P4PlancherelLCST}
\|S^{M_{1},M_{2}}_{\psi} f\|_{L^2\left(\mathbb{R}^+\times\mathbb{R},\frac{dadb}{a}\right)}=2\pi |B_{1}|\sqrt{C_{\psi,M_{1},M_{2}}}\|f\|_{L^2(\mathbb{R})}.
\end{equation}
Thus, it follows that LCST is a bounded linear operator from $L^2(\mathbb{R})$ into $L^2\left(\mathbb{R^+\times\mathbb{R}},\frac{dadb}{a}\right).$ If further $\psi$ is such that $C_{\psi,M_{1},M_{2}}=\frac{1}{2\pi|B_{1}|},$ then the operator is an isometry.
\end{itemize}
\end{remark}
\begin{theorem}(\textbf{Reconstruction formula}).\label{P4ReconstrutionTheoremLCST}
 Let $\psi$ be an admissible window and $f\in L^2(\mathbb{R})$, then $f$ can be reconstructed by the formula
\begin{equation}\label{P4ReconstructionFormulaLCST}
f(t)=\frac{1}{2\pi|B_{1}|C_{\psi,M_{1},M_{2}}}\int_{\mathbb{R}^+\times\mathbb{R}}\left(S^{M_{1},M_{2}}_{\psi}f\right)(a,b)\psi^{M_{1},M_{2}}_{a,b}(t)\frac{dadb}{a},~\mbox{a.e.}
\end{equation}
\end{theorem}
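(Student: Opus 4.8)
The plan is to prove the reconstruction formula as a straightforward consequence of the inner product relation established in Theorem \ref{P4InnerProductRelation of LCST}. The idea is the standard weak-sense reconstruction argument: rather than attempting to show the stated integral converges pointwise and equals $f(t)$, I would interpret the right-hand side weakly, by pairing it against an arbitrary test function $g\in L^2(\mathbb{R})$ and showing the resulting scalar identity reduces to $\langle f,g\rangle_{L^2(\mathbb{R})}$.

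First I would fix $g\in L^2(\mathbb{R})$ and compute the inner product of the candidate right-hand side with $g$. Writing $R(t)$ for the integral expression, I would form $\langle R,g\rangle_{L^2(\mathbb{R})}$ and, assuming Fubini's theorem applies, interchange the $t$-integration with the $(a,b)$-integration. The inner $t$-integral then becomes
\begin{equation*}
\int_{\mathbb{R}}\psi^{M_{1},M_{2}}_{a,b}(t)\overline{g(t)}\,dt=\overline{\int_{\mathbb{R}}g(t)\overline{\psi^{M_{1},M_{2}}_{a,b}(t)}\,dt}=\overline{\left(S^{M_{1},M_{2}}_{\psi}g\right)(a,b)},
\end{equation*}
using the equivalent definition \eqref{P4EquivalentLCST} of the LCST. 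Substituting this back, the pairing becomes
\begin{equation*}
\langle R,g\rangle_{L^2(\mathbb{R})}=\frac{1}{2\pi|B_{1}|C_{\psi,M_{1},M_{2}}}\int_{\mathbb{R}^+\times\mathbb{R}}\left(S^{M_{1},M_{2}}_{\psi}f\right)(a,b)\overline{\left(S^{M_{1},M_{2}}_{\psi}g\right)(a,b)}\frac{dadb}{a},
\end{equation*}
which is exactly the left-hand side of the inner product relation \eqref{P4InnerProductRelationLCST} divided by the normalizing constant. Applying Theorem \ref{P4InnerProductRelation of LCST}, this equals $\langle f,g\rangle_{L^2(\mathbb{R})}$. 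Since $g$ is arbitrary, I conclude $R=f$ in $L^2(\mathbb{R})$, i.e. the reconstruction formula holds almost everywhere.

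The main obstacle is the rigorous justification of the Fubini interchange, since the integrand need not be absolutely integrable over $\mathbb{R}^+\times\mathbb{R}\times\mathbb{R}$ without further care. The clean way to handle this is to note that the Plancherel identity \eqref{P4PlancherelLCST} guarantees $S^{M_{1},M_{2}}_{\psi}f\in L^2(\mathbb{R}^+\times\mathbb{R},\frac{dadb}{a})$, so the $(a,b)$-integral defining the weak pairing is an honest $L^2$ inner product and converges absolutely by Cauchy--Schwarz; the interchange is then legitimate on the product space once one verifies the triple integral is absolutely convergent, which follows from the same Cauchy--Schwarz bound together with $g\in L^2(\mathbb{R})$. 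I would phrase the conclusion as equality in the weak (and hence $L^2$) sense, which accounts for the ``a.e.'' qualifier in the statement and sidesteps any pointwise convergence issues.
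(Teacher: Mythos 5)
Your proposal is correct and is essentially the paper's own proof: the paper likewise pairs the reconstruction integral against an arbitrary $g\in L^2(\mathbb{R})$, uses a Fubini interchange to identify that pairing with $\left\langle S^{M_{1},M_{2}}_\psi f,S^{M_{1},M_{2}}_\psi g\right\rangle_{L^2\left(\mathbb{R}^+\times\mathbb{R},\frac{dadb}{a}\right)}$, invokes Theorem \ref{P4InnerProductRelation of LCST}, and concludes by arbitrariness of $g$. The only differences are cosmetic (the paper runs the same chain of equalities starting from $2\pi|B_{1}|C_{\psi,M_{1},M_{2}}\langle f,g\rangle_{L^2(\mathbb{R})}$ and ending at the weak pairing, whereas you run it in reverse) together with your more explicit, and appropriate, caveat that the identity is to be read in the weak/$L^2$ sense, a point the paper passes over silently.
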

\begin{proof}
From Theorem \ref{P4InnerProductRelation of LCST}, we get
\begin{align*}
2\pi|B_{1}|C_{\psi,M_{1},M_{2}}\langle f,g\rangle_{L^2(\mathbb{R})}
&=\left\langle S^{M_{1},M_{2}}_\psi f,S^{M_{1},M_{2}}_\psi g \right\rangle_{L^2(\mathbb{R^+}\times\mathbb{R},\frac{dadb}{a})}\\
&=\int_{\mathbb{R}^+\times\mathbb{R}}\left(S^{M_{1},M_{2}}_\psi f\right)(a,b)\overline{\left\{\int_{\mathbb{R}}g(t)\overline{\psi^{M_{1},M_{2}}_{a,b}(t)}dt\right\}}\frac{dadb}{a}\\
&=\int_{\mathbb{R}}\left\{\int_{\mathbb{R}^+\times\mathbb{R}}\left(S^{M_{1},M_{2}}_\psi f\right)(a,b)\psi^{M_{1},M_{2}}_{a,b}(t)\frac{dadb}{a}\right\}\overline{g(t)}dt\\
&=\left\langle\int_{\mathbb{R}^+\times\mathbb{R}}\left(S^{M_{1},M_{2}}_\psi f\right)(a,b)\psi_{a,b}^M(t)\frac{dadb}{a},g(t)\right\rangle_{L^2(\mathbb{R})}.
\end{align*}
Since $g\in L^2(\mathbb{R})$ is arbitrary, we have
$$f(t)=\frac{1}{2\pi|B_{1}|C_{\psi,M_{1},M_{2}}}\int_{\mathbb{R}^+\times\mathbb{R}}\left(S^{M_{1},M_{2}}_\psi f\right)(a,b)\psi_{a,b}^M(t)\frac{dadb}{a}~\mbox{a.e.}$$
\end{proof}
The following theorem characterizes the range of the LCST and proves that the range is a reproducing kernel Hilbert space (RKHS). It also gives the explicit expression for the reproducing kernel.

\begin{theorem}\label{P4ReproducingKernelTheoremLCST}
Let $\psi$ be an admissible window and $f\in L^2\left(\mathbb{R}^+\times\mathbb{R},\frac{dadb}{a}\right)$ then $f\in\mathcal{S}^{M_{1},M_{2}}_{\psi}\left(L^2(\mathbb{R})\right)$ iff
\begin{equation}\label{P4ReproducingFormulaLCST}
f(c,d)=\frac{1}{2\pi|B_{1}|C_{\psi,M_{1},M_{2}}}\int_{\mathbb{R}^+\times\mathbb{R}}f(a,b)\left\langle\psi^{M_{1},M_{2}}_{a,b},\psi^{M_{1},M_{2}}_{c,d}\right\rangle_{L^2(\mathbb{R})}\frac{dadb}{a}.
\end{equation}
\end{theorem}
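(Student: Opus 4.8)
The plan is to prove both implications by leaning on the reconstruction formula \eqref{P4ReconstructionFormulaLCST} and the inner product relation \eqref{P4InnerProductRelationLCST}, and along the way to read off the reproducing kernel as
$$K\left((a,b),(c,d)\right)=\frac{1}{2\pi|B_{1}|C_{\psi,M_{1},M_{2}}}\left\langle\psi^{M_{1},M_{2}}_{a,b},\psi^{M_{1},M_{2}}_{c,d}\right\rangle_{L^2(\mathbb{R})}.$$

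For the forward implication, I would suppose $f\in\mathcal{S}^{M_{1},M_{2}}_{\psi}(L^2(\mathbb{R}))$, say $f=S^{M_{1},M_{2}}_{\psi}h$ with $h\in L^2(\mathbb{R})$. Using the equivalent definition \eqref{P4EquivalentLCST}, write $f(c,d)=\langle h,\psi^{M_{1},M_{2}}_{c,d}\rangle_{L^2(\mathbb{R})}$, and then substitute the reconstruction formula \eqref{P4ReconstructionFormulaLCST} for $h$ into this inner product. After moving the inner product inside the $\tfrac{dadb}{a}$ integral, the integrand becomes $f(a,b)\langle\psi^{M_{1},M_{2}}_{a,b},\psi^{M_{1},M_{2}}_{c,d}\rangle_{L^2(\mathbb{R})}$, which is exactly \eqref{P4ReproducingFormulaLCST}.

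For the converse, assume $f\in L^2(\mathbb{R}^+\times\mathbb{R},\tfrac{dadb}{a})$ satisfies \eqref{P4ReproducingFormulaLCST}. I would define a candidate preimage $g$ weakly by
$$\left\langle g,h\right\rangle_{L^2(\mathbb{R})}=\frac{1}{2\pi|B_{1}|C_{\psi,M_{1},M_{2}}}\int_{\mathbb{R}^+\times\mathbb{R}}f(a,b)\overline{\left(S^{M_{1},M_{2}}_{\psi}h\right)(a,b)}\frac{dadb}{a},\qquad h\in L^2(\mathbb{R}).$$
By Cauchy--Schwarz in $L^2(\mathbb{R}^+\times\mathbb{R},\tfrac{dadb}{a})$ together with the Plancherel bound \eqref{P4PlancherelLCST}, this bounded antilinear functional satisfies $|\langle g,h\rangle|\le \tfrac{\|f\|}{\sqrt{C_{\psi,M_{1},M_{2}}}}\|h\|_{L^2(\mathbb{R})}$, so the Riesz representation theorem produces a unique $g\in L^2(\mathbb{R})$. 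It then remains to check $S^{M_{1},M_{2}}_{\psi}g=f$: evaluating $(S^{M_{1},M_{2}}_{\psi}g)(c,d)=\langle g,\psi^{M_{1},M_{2}}_{c,d}\rangle$ by taking $h=\psi^{M_{1},M_{2}}_{c,d}$ above and using $\overline{(S^{M_{1},M_{2}}_{\psi}\psi^{M_{1},M_{2}}_{c,d})(a,b)}=\langle\psi^{M_{1},M_{2}}_{a,b},\psi^{M_{1},M_{2}}_{c,d}\rangle$, the resulting integral is precisely the right-hand side of \eqref{P4ReproducingFormulaLCST}, hence equals $f(c,d)$. Thus $f$ lies in the range.

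The main obstacle is the analytic justification of interchanging the $L^2(\mathbb{R})$ inner product with the $\tfrac{dadb}{a}$ integral in the forward direction, and symmetrically the weak (vector-valued) interpretation of the integral defining $g$ in the converse. I would handle this by viewing the reconstruction integral as a Bochner/weak integral and invoking Fubini's theorem, whose hypotheses are secured by the Cauchy--Schwarz estimate above combined with the admissibility condition $0<C_{\psi,M_{1},M_{2}}<\infty$; this guarantees the absolute integrability needed to legitimize every interchange. Once these interchanges are justified, both implications are immediate, and the displayed kernel $K$ exhibits the range as a reproducing kernel Hilbert space.
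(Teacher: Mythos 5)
Your proof is correct, and it matches the paper's argument to different degrees in the two directions. The forward implication is exactly the paper's: write $f=S^{M_{1},M_{2}}_{\psi}h$, expand $f(c,d)=\left\langle h,\psi^{M_{1},M_{2}}_{c,d}\right\rangle_{L^2(\mathbb{R})}$, insert the reconstruction formula \eqref{P4ReconstructionFormulaLCST} for $h$, and interchange the integrals; note that the interchange you flag as the main obstacle can be avoided altogether by applying the inner product relation \eqref{P4InnerProductRelationLCST} with $g=\psi^{M_{1},M_{2}}_{c,d}$, since $\overline{\left(S^{M_{1},M_{2}}_{\psi}\psi^{M_{1},M_{2}}_{c,d}\right)(a,b)}=\left\langle\psi^{M_{1},M_{2}}_{a,b},\psi^{M_{1},M_{2}}_{c,d}\right\rangle_{L^2(\mathbb{R})}$, so \eqref{P4InnerProductRelationLCST} \emph{is} the identity \eqref{P4ReproducingFormulaLCST} in this case. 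In the converse direction your packaging genuinely differs from the paper's. The paper writes the candidate preimage explicitly as $h(t)=\frac{1}{2\pi|B_{1}|C_{\psi,M_{1},M_{2}}}\int_{\mathbb{R}^+\times\mathbb{R}}f(a,b)\,\psi^{M_{1},M_{2}}_{a,b}(t)\frac{dadb}{a}$, proves $h\in L^2(\mathbb{R})$ by a direct computation of $\|h\|^2_{L^2(\mathbb{R})}$ --- which requires a Fubini-type interchange and uses the assumed identity \eqref{P4ReproducingFormulaLCST} itself --- and then checks $S^{M_{1},M_{2}}_{\psi}h=f$. You instead define the same element weakly and obtain $g\in L^2(\mathbb{R})$ from the Riesz representation theorem, with boundedness of the functional coming from Cauchy--Schwarz together with Plancherel \eqref{P4PlancherelLCST}, i.e.\ from admissibility alone; the hypothesis \eqref{P4ReproducingFormulaLCST} is used only once, in the final verification $S^{M_{1},M_{2}}_{\psi}g=f$, which is the same computation as the paper's specialized to $h=\psi^{M_{1},M_{2}}_{c,d}$. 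Your route buys analytic cleanliness: existence of the preimage and its norm bound come for free, with no pointwise meaning to assign to a vector-valued integral and no interchange to justify in a norm estimate. The paper's route buys constructiveness: the preimage is exhibited by an explicit inversion-type integral formula rather than as an abstract Riesz representative.
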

\begin{proof}
Let $f\in L^2\left(\mathbb{R}^+\times\mathbb{R},\frac{dadb}{a}\right),$ then there exists a function $h\in L^2(\mathbb{R})$ such that $S^{M_{1},M_{2}}_\psi h=f.$
\begin{align*}
f(c,d)&= \left(S^{M_{1},M_{2}}_\psi h\right)(c,d)\\
&=\int_{\mathbb{R}}h(t)\overline{\psi^{M_{1},M_{2}}_{c,d}(t)}dt\\
&=\frac{1}{2\pi|B_{1}|C_{\psi,M_{1},M_{2}}}\int_{\mathbb{R}^+\times\mathbb{R}}\left(S^{M_{1},M_{2}}_{\psi}h\right)(a,b)\left\{\int_{\mathbb{R}}\psi^{M_{1},M_{2}}_{a,b}(t)\overline{\psi^{M_{1},M_{2}}_{c,d}(t)}dt\right\}\frac{dadb}{a}\\
&=\frac{1}{2\pi|B_{1}|C_{\psi,M_{1},M_{2}}}\int_{\mathbb{R}^+\times\mathbb{R}}f(a,b)\left\langle\psi^{M_{1},M_{2}}_{a,b},\psi^{M_{1},M_{2}}_{c,d}\right\rangle_{L^2(\mathbb{R})}\frac{dadb}{a}.
\end{align*}
Conversely, let $f\in L^2\left(\mathbb{R}^+\times\mathbb{R},\frac{dadb}{a}\right),$ is such that
$$f(c,d)=\frac{1}{2\pi|B_{1}|C_{\psi,M_{1},M_{2}}}\int_{\mathbb{R}^+\times\mathbb{R}}f(a,b)\left\langle\psi^{M_{1},M_{2}}_{a,b},\psi^{M_{1},M_{2}}_{c,d}\right\rangle_{L^2(\mathbb{R})}\frac{dadb}{a}.$$
We claim that required $h$ is given by 
$$h(t)=\frac{1}{2\pi|B_{1}|C_{\psi,M_{1},M_{2}}}\int_{\mathbb{R}^+\times\mathbb{R}}f(a,b)\psi^{M_{1},M_{2}}_{a,b}(t)\frac{dadb}{a}.$$
To show that $h\in L^2(\mathbb{R})$:
\begin{align*}
\|h\|^2_{L^2(\mathbb{R})}&=\int_{\mathbb{R}}h(t)\overline{h(t)}dt\\
&=\frac{1}{\left(2\pi|B_{1}|C_{\psi,M_{1},M_{2}}\right)^2}\int_{\mathbb{R}^+\times\mathbb{R}}\left\{\int_{\mathbb{R}^+\times\mathbb{R}}f(a,b)\langle \psi^{M_{1},M_{2}}_{a,b},\psi^{M_{1},M_{2}}_{a',b'}\rangle_{L^2(\mathbb{R})}\frac{dadb}{a}\right\}\overline{f(a',b')}\frac{da'db'}{a'}\\
&=\frac{1}{\left(2\pi|B_{1}|C_{\psi,M_{1},M_{2}}\right)^2}\int_{\mathbb{R}^+\times\mathbb{R}}\left|f(a',b')\right|^2\frac{da'db'}{a'}\\
&<\infty,~~~\mbox{since}~~f\in L^2\left(\mathbb{R}^+\times\mathbb{R},\frac{dadb}{a}\right).
\end{align*}
Therefore, $h\in L^2(\mathbb{R}).$
Now, 
\begin{align*}
\left(S^{M_{1},M_{2}}_{\psi}h\right)(a,b)&=\int_{\mathbb{R}}h(t)\overline{\psi^{M_{1},M_{2}}_{a,b}(t)}dt\\
&=\int_{\mathbb{R}}\left\{\frac{1}{2\pi|B_{1}|C_{\psi,M_{1},M_{2}}}\int_{\mathbb{R}^+\times\mathbb{R}}f(a',b')\psi^{M_{1},M_{2}}_{a',b'}(t)\frac{da'db'}{a'}\right\}\overline{\psi^{M_{1},M_{2}}_{a,b}(t)}dt\\
&=\frac{1}{2\pi|B_{1}|C_{\psi,M_{1},M_{2}}}\int_{\mathbb{R}^+\times\mathbb{R}}f(a',b')\left\langle\psi^{M_{1},M_{2}}_{a',b'},\psi^{M_{1},M_{2}}_{a,b}\right\rangle_{L^2(\mathbb{R})}\frac{da'db'}{a'}\\
&=f(a,b).
\end{align*}
This finishes the proof.
\end{proof}
\begin{corollary}
The subspace  $S^{M_{1},M_{2}}_{\psi}\left(L^2(\mathbb{R})\right)$ of  $L^2\left(\mathbb{R}^+\times\mathbb{R},\frac{dadb}{a}\right),$ for an admissable window $\psi,$ is a RKHS with the kernel
$$K^{M_{1},M_{2}}_\psi(a,b;c,d)=\frac{1}{2\pi|B_{1}|C_{\psi,M_{1},M_{2}}}\left\langle \psi^{M_{1},M_{2}}_{a,b},\psi^{M_{1},M_{2}}_{c,d}\right\rangle_{L^2(\mathbb{R})}.$$
 \end{corollary}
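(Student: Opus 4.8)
The plan is to deduce the result directly from the inner product relation and the reproducing formula already established, since a Hilbert space of functions is a reproducing kernel Hilbert space precisely when every point evaluation is a bounded linear functional. Concretely, I would verify three things in turn: that $S^{M_1,M_2}_\psi(L^2(\mathbb{R}))$ is a genuine Hilbert space (a closed subspace of the ambient $L^2$), that for each fixed $(c,d)$ the evaluation $F\mapsto F(c,d)$ is continuous on it, and finally that the representing element of this functional is exactly $K^{M_1,M_2}_\psi(\cdot,\cdot;c,d)$.

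For the first point I would use the Plancherel identity \eqref{P4PlancherelLCST}, which shows that $\frac{1}{2\pi|B_1|\sqrt{C_{\psi,M_1,M_2}}}S^{M_1,M_2}_\psi$ is an isometry of $L^2(\mathbb{R})$ onto its range. Since an isometric image of a complete space is complete, the range is a closed subspace of $L^2(\mathbb{R}^+\times\mathbb{R},\tfrac{dadb}{a})$ and hence a Hilbert space in its own right. For the continuity of point evaluation, I would write a generic element of the range as $F=S^{M_1,M_2}_\psi h$ and use the pointwise formula \eqref{P4EquivalentLCST}, giving $F(c,d)=\langle h,\psi^{M_1,M_2}_{c,d}\rangle$. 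Cauchy--Schwarz then yields $|F(c,d)|\le \|\psi^{M_1,M_2}_{c,d}\|_{L^2(\mathbb{R})}\,\|h\|_{L^2(\mathbb{R})}$, and rewriting $\|h\|$ through \eqref{P4PlancherelLCST} in terms of $\|F\|$ shows the evaluation functional is bounded on the range. By the Riesz representation theorem the range is therefore a RKHS.

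It remains to identify the kernel, and here the reproducing formula \eqref{P4ReproducingFormulaLCST} of Theorem \ref{P4ReproducingKernelTheoremLCST} does the work: rewriting it as $F(c,d)=\int_{\mathbb{R}^+\times\mathbb{R}}F(a,b)K^{M_1,M_2}_\psi(a,b;c,d)\frac{dadb}{a}$ exhibits $K^{M_1,M_2}_\psi$ as the reproducing kernel. I would close by checking the two consistency conditions that make this legitimate: that $K^{M_1,M_2}_\psi(\cdot,\cdot;c,d)$ itself lies in the range (it equals, up to the normalising constant and a conjugation, the LCST of $\psi^{M_1,M_2}_{c,d}$, using $(S^{M_1,M_2}_\psi\psi^{M_1,M_2}_{c,d})(a,b)=\langle\psi^{M_1,M_2}_{c,d},\psi^{M_1,M_2}_{a,b}\rangle$), and the Hermitian symmetry $K^{M_1,M_2}_\psi(a,b;c,d)=\overline{K^{M_1,M_2}_\psi(c,d;a,b)}$, which is immediate from the conjugate symmetry of the $L^2$ inner product.

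The only genuinely delicate point is the bookkeeping of the complex conjugate: one must confirm that the conjugate is positioned so that \eqref{P4ReproducingFormulaLCST} reads as a true reproducing identity $F(c,d)=\langle F,K^{M_1,M_2}_\psi(\cdot,\cdot;c,d)\rangle$ with respect to the ambient inner product, rather than as its conjugate. Once the Hermitian symmetry above is invoked this is settled, and everything else in the corollary is a direct reading-off from the theorems already proved.
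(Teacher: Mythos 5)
Your proposal is correct, and it is in fact more thorough than the paper, which offers no separate proof of this corollary at all: the paper simply reads it off from Theorem \ref{P4ReproducingKernelTheoremLCST}, since formula \eqref{P4ReproducingFormulaLCST} is precisely the reproducing identity with the stated kernel. What you add is the standard abstract verification that the range really is an RKHS in the technical sense: closedness of the range (because $S^{M_{1},M_{2}}_{\psi}$ is a constant multiple of an isometry, so its image of the complete space $L^2(\mathbb{R})$ is complete), boundedness of each evaluation functional via Cauchy--Schwarz and the pointwise formula \eqref{P4EquivalentLCST} (note $\|\psi^{M_{1},M_{2}}_{c,d}\|_{L^2(\mathbb{R})}=\sqrt{c}\,\|\psi\|_{L^2(\mathbb{R})}<\infty$), and the Riesz representation theorem. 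This buys a genuinely complete argument, whereas the paper leaves the Hilbert-space structure and the boundedness of point evaluations entirely implicit.

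Two bookkeeping remarks. First, the constant you quote from \eqref{P4PlancherelLCST} is the paper's own inconsistency: taking $f=g$ in \eqref{P4InnerProductRelationLCST} gives $\|S^{M_{1},M_{2}}_{\psi}f\|=\sqrt{2\pi|B_{1}|C_{\psi,M_{1},M_{2}}}\,\|f\|$, not $2\pi|B_{1}|\sqrt{C_{\psi,M_{1},M_{2}}}\,\|f\|$; this does not affect your argument, since any nonzero constant multiple of an isometry has closed range. Second, your appeal to Hermitian symmetry to fix the conjugation is essentially right but should be made precise: the Riesz representer of evaluation at $(c,d)$ is $(a,b)\mapsto\overline{K^{M_{1},M_{2}}_{\psi}(a,b;c,d)}=K^{M_{1},M_{2}}_{\psi}(c,d;a,b)=\frac{1}{2\pi|B_{1}|C_{\psi,M_{1},M_{2}}}\left(S^{M_{1},M_{2}}_{\psi}\psi^{M_{1},M_{2}}_{c,d}\right)(a,b)$, which does lie in the range (this also completes your membership check without any dangling conjugation). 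The kernel as the paper writes it reproduces by plain integration against $F$, exactly as \eqref{P4ReproducingFormulaLCST} states, and coincides with the representer only after the argument swap. This is a matter of convention, not a gap in your argument.
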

\section{Multiresolution Analysis associated with the LCST}
Multiresolution analysis (MRA) is an efficient tool for the construction of an orthonormal wavelet basis for $L^2(\mathbb{R}).$ This ground breaking idea was initiated first in \cite{mallat1989multiresolution}. Recently, several authors have developed several MRAs and the corresponding orthonormal basis (ONB) (see \cite{dai2017new},\cite{shah2022special}). In this section we develop a novel MRA and discuss the construction of ONB for $L^2(\mathbb{R}).$ 

\begin{definition}\label{P4DefinitionMRA}
A sequence of closed subspace $V^{M_{1},M_{2}}_{m}\subset  L^2(\mathbb{R}),~m\in\mathbb{Z}$ is called an orthogonal MRA associated with the LCST if it satisfy the following
\begin{enumerate}
\item\label{P4DefnMRA01} $V^{M_{1},M_{2}}_{m}\subset V^{M_{1},M_{2}}_{m+1},~\forall m\in\mathbb{Z}$
\item\label{P4DefnMRA02} $f(t)\in V^{M_{1},M_{2}}_{m}\iff f(2t)e^{\frac{iA_{1}}{2B_{1}}\left\{(2t)^2-t^2\right\}}\in V^{M_{1},M_{2}}_{m+1},~\forall m\in\mathbb{Z}$
\item\label{P4DefnMRA03} $\displaystyle \bigcap_{m\in\mathbb{Z}}V^{M_{1},M_{2}}_{m}=\{0\},~\overline{\bigcup_{m\in\mathbb{Z}}V^{M_{1},M_{2}}_{m}}=L^2(\mathbb{R})$
\item\label{P4DefnMRA04} there exists $\phi$ in $ L^2(\mathbb{R}),$ called the scaling function, for which $\{\phi_{M_{1},M_{2},0,n}(t)\}_{n\in\mathbb{Z}}$  form an orthonormal basis (ONB) of $V^{M_{1},M_{2}}_{0},$ where 
\begin{align}\label{P4MRA040}
\phi_{M_{1},M_{2},0,n}=\phi(t-n)e^{-\frac{i}{2}\left\{\left(t^2-n^2\right)\frac{A_{1}}{B_{1}}-(t-n)^2\frac{A_{2}}{B_{2}}-\frac{2(t+n)}{B_{1}}\right\}}.
\end{align}
\end{enumerate} 
\end{definition}
\begin{theorem}
Let $\left\{V^{M_{1},M_{2}}_{m}\right\}_{m\in\mathbb{Z}}$ be an orthogonal MRA of $L^2(\mathbb{R})$ with $\phi$ as an scaling function. Then $\left\{\phi_{M_{1},M_{2},m,n}\right\}_{n\in\mathbb{Z}}$ form an ONB of $V^{M_{1},M_{2}}_{m},~m\in\mathbb{Z},$  where
\begin{align}\label{P4TranslatedandDilatesofphi}
\phi_{M_{1},M_{2},m,n}(t)=2^{\frac{m}{2}}\phi(2^mt-n)e^{-\frac{i}{2}\left\{\left(t^2-\left(\frac{n}{2^m}\right)^2\right)\frac{A_{1}}{B_{1}}-\left(2^mt-n\right)^2\frac{A_{2}}{B_{2}}-\frac{2(2^mt+n)}{B_{1}}\right\}},~n\in\mathbb{Z}.
\end{align} 
\end{theorem}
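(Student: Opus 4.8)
The plan is to exhibit a single unitary operator on $L^2(\mathbb{R})$ that realizes the passage from scale $m$ to scale $m+1$ encoded in condition \ref{P4DefnMRA02}, and then to run an induction on $m$ with the base case $m=0$ supplied by condition \ref{P4DefnMRA04}. Concretely, I would introduce the dilation--chirp operator
\[
(\mathcal{D}f)(t)=\sqrt{2}\,e^{\frac{iA_{1}}{2B_{1}}\left\{(2t)^2-t^2\right\}}f(2t)=\sqrt{2}\,e^{\frac{3iA_{1}}{2B_{1}}t^2}f(2t).
\]
First I would check that $\mathcal{D}$ is unitary on $L^2(\mathbb{R})$: the chirp is unimodular, so $\|\mathcal{D}f\|_{2}^{2}=2\int_{\mathbb{R}}|f(2t)|^2\,dt=\|f\|_{2}^{2}$, and $\mathcal{D}$ is invertible with $(\mathcal{D}^{-1}g)(t)=\tfrac{1}{\sqrt2}e^{-\frac{3iA_{1}}{8B_{1}}t^2}g(t/2)$.

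The next step is to read condition \ref{P4DefnMRA02} as the equivalence $f\in V^{M_{1},M_{2}}_{m}\iff \mathcal{D}f\in V^{M_{1},M_{2}}_{m+1}$; the normalizing factor $\sqrt2$ is irrelevant here because each $V^{M_{1},M_{2}}_{m+1}$ is a linear subspace. The forward implication gives $\mathcal{D}\bigl(V^{M_{1},M_{2}}_{m}\bigr)\subseteq V^{M_{1},M_{2}}_{m+1}$, while applying the reverse implication to $\mathcal{D}^{-1}g$ gives surjectivity, so that $\mathcal{D}\bigl(V^{M_{1},M_{2}}_{m}\bigr)=V^{M_{1},M_{2}}_{m+1}$ for every $m\in\mathbb{Z}$. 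Being unitary, $\mathcal{D}$ then carries any orthonormal basis of $V^{M_{1},M_{2}}_{m}$ onto an orthonormal basis of $V^{M_{1},M_{2}}_{m+1}$, and likewise $\mathcal{D}^{-1}$ for the negative direction.

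The heart of the argument, and the step I expect to be most delicate, is the one-scale identity $\mathcal{D}\,\phi_{M_{1},M_{2},m,n}=c_{m,n}\,\phi_{M_{1},M_{2},m+1,n}$, obtained by substituting \eqref{P4TranslatedandDilatesofphi} into the definition of $\mathcal{D}$ and reconciling the chirp blocks. The amplitude $\phi(2^{m+1}t-n)$, the block $\tfrac{A_{2}}{B_{2}}(2^{m+1}t-n)^2$, and the linear block $\tfrac{2(2^{m+1}t+n)}{B_{1}}$ will match exactly, and $\sqrt2\cdot 2^{m/2}=2^{(m+1)/2}$ fixes the normalization. The only subtlety lies in the $\tfrac{A_{1}}{B_{1}}$ block: combining $-\tfrac{iA_{1}}{2B_{1}}\bigl(4t^2-(n2^{-m})^2\bigr)$ with the operator chirp $\tfrac{3iA_{1}}{2B_{1}}t^2$ produces $-\tfrac{iA_{1}}{2B_{1}}\bigl(t^2-(n2^{-m})^2\bigr)$, which differs from the required $-\tfrac{iA_{1}}{2B_{1}}\bigl(t^2-(n2^{-(m+1)})^2\bigr)$ only by the $t$-independent phase $c_{m,n}=\exp\!\bigl(\tfrac{3iA_{1}n^{2}}{8B_{1}4^{m}}\bigr)$. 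Since $|c_{m,n}|=1$, this constant is harmless for orthonormality, and I would flag precisely this phase as the one computation that must be carried out with care.

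With these pieces in place the induction is immediate. The base case $m=0$ is exactly condition \ref{P4DefnMRA04}. Assuming $\{\phi_{M_{1},M_{2},m,n}\}_{n\in\mathbb{Z}}$ is an ONB of $V^{M_{1},M_{2}}_{m}$, unitarity of $\mathcal{D}$ makes $\{\mathcal{D}\phi_{M_{1},M_{2},m,n}\}_{n\in\mathbb{Z}}=\{c_{m,n}\phi_{M_{1},M_{2},m+1,n}\}_{n\in\mathbb{Z}}$ an ONB of $V^{M_{1},M_{2}}_{m+1}$, and discarding the unimodular factors leaves $\{\phi_{M_{1},M_{2},m+1,n}\}_{n\in\mathbb{Z}}$ an ONB of $V^{M_{1},M_{2}}_{m+1}$. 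The negative scales follow identically by applying $\mathcal{D}^{-1}$, which yields $\mathcal{D}^{-1}\phi_{M_{1},M_{2},m,n}=\overline{c_{m-1,n}}\,\phi_{M_{1},M_{2},m-1,n}$. This establishes the claim for all $m\in\mathbb{Z}$.
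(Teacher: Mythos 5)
Your proof is correct, and it takes a genuinely different route from the paper's. The paper argues in two separate computational steps: it first proves orthonormality of $\left\{\phi_{M_{1},M_{2},m,n}\right\}_{n\in\mathbb{Z}}$ by expanding $\left\langle\phi_{M_{1},M_{2},m,n},\phi_{M_{1},M_{2},m,k}\right\rangle_{L^2(\mathbb{R})}$, substituting $2^{m}t\mapsto t$ to reduce everything to integrals of translates of $\phi$ against chirps, and then invoking the scale-zero orthonormality furnished by condition (\ref{P4DefnMRA04}) of Definition \ref{P4DefinitionMRA}; it then proves completeness separately, by using condition (\ref{P4DefnMRA02}) to transport $f\in V^{M_{1},M_{2}}_{m}$ down to $V^{M_{1},M_{2}}_{0}$, expanding there in the scale-zero basis, and substituting back. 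You instead package condition (\ref{P4DefnMRA02}) as a single unitary dilation--chirp operator $\mathcal{D}$, prove $\mathcal{D}\left(V^{M_{1},M_{2}}_{m}\right)=V^{M_{1},M_{2}}_{m+1}$, and establish the intertwining relation $\mathcal{D}\phi_{M_{1},M_{2},m,n}=c_{m,n}\phi_{M_{1},M_{2},m+1,n}$ with the unimodular constant $c_{m,n}=\exp\left(\frac{3iA_{1}n^{2}}{8B_{1}4^{m}}\right)$, which I have verified is the correct phase; induction then delivers orthonormality and completeness simultaneously from unitarity. Your route has two advantages: it collapses the paper's two independent calculations into one structural argument, and it makes rigorous a step the paper leaves implicit, namely that passing from $f\in V^{M_{1},M_{2}}_{m}$ to $f(2^{-m}t)e^{\frac{i}{2}\left\{(2^{-m}t)^2-t^2\right\}\frac{A_{1}}{B_{1}}}\in V^{M_{1},M_{2}}_{0}$ requires iterating condition (\ref{P4DefnMRA02}) $m$ times and checking that the accumulated chirps telescope---which is exactly the closed form of $\mathcal{D}^{-m}$ implied by your one-scale identity. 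What the paper's approach buys in return is that it never leaves the level of elementary integral manipulation. One line worth adding to your write-up: when you assert that the unitary $\mathcal{D}$ carries an ONB of $V^{M_{1},M_{2}}_{m}$ onto an ONB of $V^{M_{1},M_{2}}_{m+1}$, the completeness of the image system uses surjectivity ($g\in V^{M_{1},M_{2}}_{m+1}$ equals $\mathcal{D}f$ for some $f\in V^{M_{1},M_{2}}_{m}$) together with continuity of $\mathcal{D}$ to push the expansion of $f$ through the operator; this is standard, but since it is precisely where $\mathcal{D}\left(V^{M_{1},M_{2}}_{m}\right)=V^{M_{1},M_{2}}_{m+1}$ (rather than mere inclusion) enters, it deserves to be stated explicitly.
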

\begin{proof}
Using \eqref{P4TranslatedandDilatesofphi}, we have
\begin{align*}
\langle \phi_{M_{1},M_{2},m,n},&\phi_{M_{1},M_{2},m,k}\rangle_{L^2(\mathbb{R})}\\
&=2^me^{\frac{i}{B_1}(n-k)}\int_{\mathbb{R}}\phi(2^mt-n)\overline{\phi(2^mt-k)}e^{-\frac{i}{2}\left\{\left(t^2-\left(\frac{n}{2^m}\right)^2\right)\frac{A_{1}}{B_{1}}-\left(2^mt-n\right)^2\frac{A_{2}}{B_{2}}\right\}+\frac{i}{2}\left\{\left(t^2-\left(\frac{k}{2^m}\right)^2\right)\frac{A_{1}}{B_{1}}-\left(2^mt-k\right)^2\frac{A_{2}}{B_{2}}\right\}}dt\\
&=e^{\frac{i}{B_1}(n-k)-\frac{i}{2}\left\{\left(\frac{k}{2^m}\right)^2-\left(\frac{n}{2^m}\right)^2\right\}\frac{A_{1}}{B_{1}}}\int_{\mathbb{R}}\phi(t-n)\overline{\phi(t-k)}e^{-\frac{i}{2}\left\{-\left(t-n\right)^2\frac{A_{2}}{B_{2}}+\left(t-k\right)^2\frac{A_{2}}{B_{2}}\right\}}dt.
\end{align*}
Since $\left\{\phi_{M_{1},M_{2},0,n}\right\}_{n\in\mathbb{Z}}$ forms an ONB of $V^{M_{1},M_{2}}_{0},$ so
\begin{align*}
\int_{\mathbb{R}}\phi_{M_{1},M_{2},0,n}(t)\overline{\phi_{M_{1},M_{2},0,k}(t)}dt=\delta(n-k).
\end{align*}
This implies
$$e^{\frac{i}{B_1}(n-k)}\int_{\mathbb{R}}\phi(t-n)\overline{\phi(t-k)}e^{-\frac{i}{2}\left\{\left(k^2-n^2\right)\frac{A_{1}}{B_{1}}-\left(t-n\right)^2\frac{A_{2}}{B_{2}}+(t-k)^2\frac{A_{2}}{B_{2}}\right\}}=\delta(n-k).$$
This further gives
$$e^{\frac{i}{B_1}(n-k)-\frac{i}{2}\left(k^2-n^2\right)\frac{A_{1}}{B_{1}}}\int_{\mathbb{R}}\phi(t-n)\overline{\phi(t-k)}e^{-\frac{i}{2}\left\{-\left(t-n\right)^2\frac{A_{1}}{B_{1}}+(t-k)^2\frac{A_{2}}{B_{2}}\right\}}dt=
\begin{cases}
0,~n\neq k\\
1,~n=k.
\end{cases}
$$
$$\mbox{i.e.,} \int_{\mathbb{R}}\phi(t-n)\overline{\phi(t-n)}dt=1$$
$$\mbox{and}\int_{\mathbb{R}}\phi(t-n)\overline{\phi(t-k)}e^{-\frac{i}{2}\{-(t-n)^2\frac{A_{1}}{B_{1}}+(t-k)^2\frac{A_{2}}{B_{2}}\}}dt=0,~if~n\neq k.$$
Now 
\begin{align*}
\Big\langle \phi_{M_{1},M_{2},m,n},&\phi_{M_{1},M_{2},m,k}\Big\rangle_{L^2(\mathbb{R})}\\
&=
\begin{cases}
\displaystyle e^{\frac{i}{B_1}(n-k)-\frac{i}{2}\left\{\left(\frac{k}{2^m}\right)^2-\left(\frac{n}{2^m}\right)^2\right\}\frac{A_{1}}{B_{1}}}\int_{\mathbb{R}}\phi(t-n)\overline{\phi(t-k)}e^{-\frac{i}{2}\left\{-(t-n)^2\frac{A_{2}}{B_{2}}+(t-k)^2\frac{A_{2}}{B_{2}}\right\}}dt,~if~n\neq k\\
\displaystyle\int_{\mathbb{R}}\phi(t-n)\overline{\phi(t-k)}dt,~if~n=k
\end{cases}\\
&=
\begin{cases}
0,~if~n\neq k\\
1,~if ~n=k.
\end{cases}
\end{align*}
Therefore, $\left\{\phi_{M_{1},M_{2},m,n}\right\}_{n\in\mathbb{Z}}$ form an orthonormal system.
Let $f\in V^{M_{1},M_{2}}_{m},$ then according to (\ref{P4DefnMRA02}) of the definition \ref{P4DefinitionMRA}, we have 
$$f(2^{-m}t)e^{\frac{i}{2}\left\{\left(2^{-m}t\right)^2-t^2\right\}\frac{A_{1}}{B_{1}}}\in V^{M_{1},M_{2}}_{0}.$$
Since $\left\{\phi_{M_{1},M_{2},0,n}\right\}_{n\in\mathbb{Z}}$ is an ONB of $V^{M_{1},M_{2}}_{0},$ so we have
$$f\left(2^{-m}t\right)e^{\frac{i}{2}\left\{\left(2^{-m}t\right)^2-t^2\right\}\frac{A_{1}}{B_{1}}}=\sum_{n\in\mathbb{Z}}c_{n}\phi(t-n)e^{-\frac{i}{2}\left\{\left(t^2-n^2\right)\frac{A_{1}}{B_{1}}-(t-n)^2\frac{A_{2}}{B_{2}}-\frac{2(t+n)}{B_{1}}\right\}}.$$
This implies 
$$f\left(2^{-m}t\right)=\sum_{n\in\mathbb{Z}}c_{n}\phi(t-n)e^{-\frac{i}{2}\left\{\left\{\left(2^{-m}t\right)^2-n^2\right\}\frac{A_{1}}{B_{1}}-(t-n)^2\frac{A_{2}}{B_{2}}-\frac{2(t+n)}{B_{1}}\right\}}.$$
Now,
\begin{align*}
f(t)
&=\sum_{n\in\mathbb{Z}}c_{n}\phi(2^mt-n)e^{-\frac{i}{2}\left\{\left(t^2-n^2\right)\frac{A_{1}}{B_{1}}-\left(2^mt-n\right)^2\frac{A_{2}}{B_{2}}-\frac{2(2^mt+n)}{B_{1}}\right\}}\\
&=\sum_{n\in\mathbb{Z}}c'_{n}\phi_{M_{1},M_{2},m,n}(t),~\mbox{where}~c'_{n}=2^{-\frac{m}{2}}c_ne^{-\frac{i}{2}\left\{\left(\frac{n}{2^m}\right)^2-n^2\right\}}.
\end{align*}
Therefore, 
$$V^{M_{1},M_{2}}_{m}=\overline{\mbox{span}}\left\{\phi_{M_{1},M_{2},m,n}\right\}_{n\in\mathbb{Z}},$$
i.e., $\left\{\phi_{M_{1},M_{2},m,n}\right\}_{n\in\mathbb{Z}}$ forms an ONB of $V^{M_{1},M_{2}}_{m}.$
\end{proof}
If the orthogonality of $\left\{\phi_{M_{1},M_{2},0,n}\right\}_{n\in\mathbb{Z}}$ in condition (\ref{P4DefnMRA04}) of Definition \ref{P4DefinitionMRA} is replaced by assumption that it is a Riesz basis of $V_{0}^{M_{1},M_{2}},$ then $\phi(t)$ generates a so called generalized MRA of $L^2(\mathbb{R})$ associated with the LCST.
\begin{theorem}
Let $\phi\in L^2(\mathbb{R})$ and $V^{M_{1},M_{2}}_{0}=\overline{\mbox{span}}\left\{\phi_{M_{1},M_{2},0,n}(t)\right\}_{n\in\mathbb{Z}},$ where $\phi_{M_{1},M_{2},0,n}(t)$ is defined as \eqref{P4MRA040}, then $\left\{\phi_{M_{1},M_{2},0,n}\right\}_{n\in\mathbb{Z}}$ forms a Riesz basis of $V^{M_{1},M_{2}}_{0}$ iff $\exists$ $A,B\in\mathbb{R}^+$ with $A\leq B$ such that 
$$ A\leq \sum_{k\in\mathbb{Z}}\left|\left(\mathcal{L}^{M_{2}}\left\{e^{\frac{it}{B_{1}}}\phi(t)\right\}\right)\left(\frac{B_{2}}{B_{1}}\left(u+2k\pi|B_{1}|\right)\right)\right|^2\leq B,~\forall u\in[0,2\pi|B_1|].$$ 
\end{theorem}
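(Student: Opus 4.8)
The plan is to pass to the linear canonical domain, where the generating family turns into a modulated Fourier system, and then to reduce the Riesz-basis inequalities to a pointwise two-sided bound on a periodized weight. Recall that $\{\phi_{M_1,M_2,0,n}\}_{n\in\mathbb{Z}}$, being complete in $V_0^{M_1,M_2}$ by construction, is a Riesz basis of $V_0^{M_1,M_2}$ if and only if there exist $0<A\le B<\infty$ with
\begin{equation*}
A\sum_{n}|c_n|^2\le \Big\|\sum_n c_n\phi_{M_1,M_2,0,n}\Big\|^2_{L^2(\mathbb{R})}\le B\sum_n|c_n|^2
\end{equation*}
for every finitely supported sequence $\{c_n\}$. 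Since every step below is an exact equality or equivalence, establishing these inequalities and tracking the chain backward will yield both directions of the asserted equivalence at once.

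First I would invoke the LCT Plancherel formula \eqref{P4PlancherelLCT} to replace the middle norm by $\|\sum_n c_n\,\mathcal{L}^{M_1}\phi_{M_1,M_2,0,n}\|_{L^2(\mathbb{R})}$. Next, noting that $\phi_{M_1,M_2,0,n}$ equals $\psi^{M_1,M_2}_{1,n}$ (with $\psi=\phi$) up to the unimodular constant $e^{in/B_1}$, I would specialize the kernel identity \eqref{P4LCT-Kernel-LCST} to $a=1,\ b=n,\ \psi=\phi$ to obtain
\begin{equation*}
\big(\mathcal{L}^{M_1}\phi_{M_1,M_2,0,n}\big)(\xi)=\omega_n\,e^{-\frac{i n\xi}{B_1}}H(\xi),
\end{equation*}
where $\omega_n$ is a $\xi$-independent unimodular constant and $H(\xi)=\sqrt{B_2/B_1}\,e^{i\theta(\xi)}\big(\mathcal{L}^{M_2}\{e^{it/B_1}\phi(t)\}\big)(B_2\xi/B_1)$ with $\theta$ real quadratic. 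The decisive structural feature is that the only $n$-dependence coupled to $\xi$ is the exponential $e^{-in\xi/B_1}$; setting $d_n=c_n\omega_n$ absorbs the remaining phases without changing $\sum|d_n|^2=\sum|c_n|^2$. As $|H(\xi)|^2=\tfrac{|B_2|}{|B_1|}\,|\Phi(\xi)|^2$ with $\Phi(\xi)=\big(\mathcal{L}^{M_2}\{e^{it/B_1}\phi(t)\}\big)(B_2\xi/B_1)$, this gives
\begin{equation*}
\Big\|\sum_n c_n\phi_{M_1,M_2,0,n}\Big\|^2=\int_{\mathbb{R}}|P(\xi)|^2\,|H(\xi)|^2\,d\xi,\qquad P(\xi)=\sum_n d_n e^{-\frac{in\xi}{B_1}}.
\end{equation*}

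The third step is periodization. Since $P$ has period $2\pi|B_1|$ in $\xi$, I would fold the integral onto $[0,2\pi|B_1|]$, producing the weight
\begin{equation*}
\Lambda(u)=\sum_{k\in\mathbb{Z}}|H(u+2\pi k|B_1|)|^2=\frac{|B_2|}{|B_1|}\sum_{k\in\mathbb{Z}}\Big|\big(\mathcal{L}^{M_2}\{e^{it/B_1}\phi(t)\}\big)\big(\tfrac{B_2}{B_1}(u+2\pi k|B_1|)\big)\Big|^2,
\end{equation*}
so that $\|\sum_n c_n\phi_{M_1,M_2,0,n}\|^2=\int_0^{2\pi|B_1|}|P(u)|^2\Lambda(u)\,du$. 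Because $\{(2\pi|B_1|)^{-1/2}e^{-inu/B_1}\}_n$ is an orthonormal basis of $L^2([0,2\pi|B_1|])$, Parseval gives $\int_0^{2\pi|B_1|}|P(u)|^2\,du=2\pi|B_1|\sum_n|d_n|^2$, so the Riesz inequalities are equivalent to
\begin{equation*}
\frac{A}{2\pi|B_1|}\int_0^{2\pi|B_1|}|P|^2\le \int_0^{2\pi|B_1|}|P|^2\,\Lambda\le \frac{B}{2\pi|B_1|}\int_0^{2\pi|B_1|}|P|^2
\end{equation*}
holding for all trigonometric polynomials $P$.

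The main obstacle is the final equivalence: that this family of weighted inequalities, ranging over all $P$, holds if and only if $\tfrac{A}{2\pi|B_1|}\le\Lambda(u)\le\tfrac{B}{2\pi|B_1|}$ for almost every $u$. Sufficiency is immediate from monotonicity of the integral; for necessity I would use the standard localization argument, taking Fej\'er-type trigonometric polynomials whose $|P|^2$-mass concentrates at a Lebesgue point of $\Lambda$ to force the pointwise bounds and rule out any violation of them on a positive-measure set. Finally, since $\Lambda$ differs from the bracket sum in the statement only by the positive constant $|B_2|/|B_1|$, essential boundedness of $\Lambda$ away from $0$ and $\infty$ is equivalent to the existence of constants $A,B>0$ bounding $\sum_{k}\big|\big(\mathcal{L}^{M_2}\{e^{it/B_1}\phi\}\big)\big(\tfrac{B_2}{B_1}(u+2k\pi|B_1|)\big)\big|^2$ from below and above on $[0,2\pi|B_1|]$, which is precisely the claimed condition after renaming the constants.
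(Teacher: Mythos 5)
Your proposal is correct and follows essentially the same route as the paper's proof: pass to the linear canonical domain via the Plancherel formula \eqref{P4PlancherelLCT} and the kernel identity \eqref{P4LCT-Kernel-LCST} specialized to $a=1$, $b=n$, fold the resulting integral onto $[0,2\pi|B_1|]$ against the periodized weight, and use Parseval for the (modulated) exponential system to reduce the Riesz inequalities to two-sided bounds on that weight. If anything, your Fej\'er-kernel localization argument for the necessity of the pointwise bounds is more careful than the paper's corresponding step, which merely appeals to ``the arbitrariness of $\tilde{\mathcal{L}}^{M_{1}}[c_{n}e^{\frac{2in}{B_{1}}}](u)$.''
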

\begin{proof}
For any $f\in V^{M_{1},M_{2}}_{0},$ we have 
$$f(t)=\sum_{n\in\mathbb{Z}}c_{n}\phi_{M_{1},M_{2},0,n}(t).$$
This gives
\begin{align*}
\left(\mathcal{L}^{M_{1}}f\right)(u)=\sum_{n\in\mathbb{Z}}c_{n}\left(\mathcal{L}^{M_{1}}\phi_{M_{1},M_{2},0,n}\right)(u).
\end{align*}
Since 
$$\left(\mathcal{L}^{M_{1}}\phi_{M_{1},M_{2},0,n}\right)(u)=\sqrt{2\pi iB_{2}}e^{\frac{2in}{B_{1}}-\frac{iD_{2}}{2B_{2}}\left(\frac{B_{2}u}{B_{1}}\right)^2}K_{M_{1}}(n,u)\left(\mathcal{L}^{M_{2}}\left\{e^{\frac{it}{B_{1}}}\phi(t)\right\}\right)\left(\frac{B_{2}u}{B_{1}}\right),$$ 
we have
\begin{align*}
\left(\mathcal{L}^{M_{1}}f\right)(u)
&=\sqrt{2\pi iB_{2}}e^{-\frac{iD_{2}}{2B_{2}}\left(\frac{B_{2}u}{B_{1}}\right)^2}\sum_{n\in\mathbb{Z}}c_{n}e^{\frac{2in}{B_{1}}}K_{M_{1}}(n,u)\left(\mathcal{L}^{M_{2}}\left\{e^{\frac{it}{B_{1}}}\phi(t)\right\}\right)\left(\frac{B_{2}u}{B_{1}}\right)\\
&=\sqrt{2\pi iB_{2}}e^{-\frac{iD_{2}}{2B_{2}}\left(\frac{B_{2}u}{B_{1}}\right)^2}\tilde{\mathcal{L}}^{M_{1}}\left[c_{n}e^{\frac{2in}{B_{1}}}\right](u)\left(\mathcal{L}^{M_{2}}\left\{e^{\frac{it}{B_{1}}}\phi(t)\right\}\right)\left(\frac{B_{2}u}{B_{1}}\right)
\end{align*}
$$\mbox{i.e.,}~\left(\mathcal{L}^{M_{1}}f\right)(u)=\sqrt{2\pi iB_{2}}e^{-\frac{iD_{2}}{2B_{2}}\left(\frac{B_{2}u}{B_{1}}\right)^2}\tilde{\mathcal{L}}^{M_{1}}\left[c_{n}e^{\frac{2in}{B_{1}}}\right](u)\left(\mathcal{L}^{M_{2}}\left\{e^{\frac{it}{B_{1}}}\phi(t)\right\}\right)\left(\frac{B_{2}u}{B_{1}}\right),$$
where $\tilde{\mathcal{L}}^{M_{1}}\left[c_{n}e^{\frac{2in}{B_{1}}}\right](u)=\displaystyle\sum_{n\in\mathbb{Z}}c_{n}e^{\frac{2in}{B_{1}}}K_{M_{1}}(n,u).$\\
Now, 
\begin{align*}
\|f\|^2_{L^2(\mathbb{R})}&=\left\|\mathcal{L}^{M_{1}} f\right\|^2_{L^2(\mathbb{R})}=2\pi|B_{2}|\int_{\mathbb{R}}\left|\tilde{\mathcal{L}}^{M_{1}}\left[c_{n}e^{\frac{2in}{B_{1}}}\right](u)\right|^2\left|\left(\mathcal{L}^{M_{2}}\left\{e^{\frac{it}{B_{1}}}\phi(t)\right\}\right)\left(\frac{B_{2}u}{B_{1}}\right)\right|^2du\\
&=2\pi|B_{2}|\sum_{k\in\mathbb{Z}}\int_{2k\pi|B_{1}|}^{2(k+1)\pi|B_{1}|}\left|\tilde{\mathcal{L}}^{M_{1}}\left[c_{n}e^{\frac{2in}{B_{1}}}\right](u)\right|^2\left|\left(\mathcal{L}^{M_{2}}\left\{e^{\frac{it}{B_{1}}}\phi(t)\right\}\right)\left(\frac{B_{2}u}{B_{1}}\right)\right|^2du\\
&=2\pi|B_{2}|\sum_{k\in\mathbb{Z}}\int_{0}^{2\pi|B_{1}|}\left|\tilde{\mathcal{L}}^{M_{1}}\left[c_{n}e^{\frac{2in}{B_{1}}}\right]\left(u+2k\pi|B_{1}|\right)\right|^2\left|\left(\mathcal{L}^{M_{2}}\left\{e^{\frac{it}{B_{1}}}\phi(t)\right\}\right)\left(\frac{B_{2}\left(u+2k\pi|B_{1}|\right)}{B_{1}}\right)\right|^2du\\
&=2\pi|B_{2}|\sum_{k\in\mathbb{Z}}\int_{0}^{2\pi|B_{1}|}\left|\tilde{\mathcal{L}}^{M_{1}}\left[c_{n}e^{\frac{2in}{B_{1}}}\right](u)\right|^2\left|\left(\mathcal{L}^{M_{2}}\left\{e^{\frac{it}{B_{1}}}\phi(t)\right\}\right)\left(\frac{B_{2}\left(u+2k\pi|B_{1}|\right)}{B_{1}}\right)\right|^2du,\\
&\hspace{7cm}~\mbox{since}~ \left|\tilde{\mathcal{L}}^{M_{1}}\left[c_{n}e^{\frac{2in}{B_{1}}}\right](u)\right|~ \mbox{is}~ 2k\pi|B_{1}|~\mbox{periodic}.
\end{align*}
i.e., 
\begin{align}\label{P4theo.Eqn1}
\|f\|^2_{L^2(\mathbb{R})}=2\pi|B_{2}|\int_{0}^{2\pi|B_{1}|}\left|\tilde{\mathcal{L}}^{M_{1}}\left[c_{n}e^{\frac{2in}{B_{1}}}\right](u)\right|^2\sum_{k\in\mathbb{Z}}\left|\left(\mathcal{L}^{M_{2}}\left\{e^{\frac{it}{B_{1}}}\phi(t)\right\}\right)\left(\frac{B_{2}(u+2k\pi|B_{1}|)}{B_{1}}\right)\right|^2du.
\end{align}
It is given that 
\begin{align}\label{P4theo.Eqn2}
A\leq \sum_{k\in\mathbb{Z}}\left|\left(\mathcal{L}^{M_{2}}\left\{e^{\frac{it}{B_{1}}}\phi(t)\right\}\right)\left(\frac{B_{2}}{B_{1}}\left(u+2k\pi|B_{1}|\right)\right)\right|^2\leq B
\end{align}
and also we have 
$$\|c'_{n}\|^2_{l^2(\mathbb{Z})}=\int_{0}^{2\pi|B_{1}|}\left|\tilde{\mathcal{L}}^{M_{1}}\left[c_{n}e^{\frac{2in}{B_{1}}}\right](u)\right|^2du,~\mbox{where}~c'_{n}=c_{n}e^{\frac{2in}{B_{1}}}$$ 
i.e., 
\begin{align}\label{P4theo.Eqn3}
\|c_{n}\|^2_{l^2(\mathbb{Z})}=\int_{0}^{2\pi|B_{1}|}\left|\tilde{\mathcal{L}}^{M_{1}}\left[c_{n}e^{\frac{2in}{B_{1}}}\right](u)\right|^2du.
\end{align}
Therefore, we have 
$$A \left|\tilde{\mathcal{L}}^{M_{1}}\left[c_{n}e^{\frac{2in}{B_{1}}}\right](u)\right|^2\leq\left|\tilde{\mathcal{L}}^{M_{1}}\left[c_{n}e^{\frac{2in}{B_{1}}}\right](u)\right|^2\sum_{k\in\mathbb{Z}}\left|\left(\mathcal{L}^{M_{2}}\left\{e^{\frac{it}{B_{1}}}\phi(t)\right\}\right)\left(\frac{B_{2}\left(u+2k\pi|B_{1}|\right)}{B_{1}}\right)\right|^2\leq B\left|\tilde{\mathcal{L}}^{M_{1}}\left[c_{n}e^{\frac{2in}{B_{1}}}\right](u)\right|^2.$$
This implies

\begin{align*}
A \int_{0}^{2\pi|B_{1}|}\left|\tilde{\mathcal{L}}^{M_{1}}\left[c_{n}e^{\frac{2in}{B_{1}}}\right](u)\right|^2du\leq\int_{0}^{2\pi|B_1|}\left|\tilde{\mathcal{L}}^{M_{1}}\left[c_{n}e^{\frac{2in}{B_{1}}}\right](u)\right|^2\sum_{k\in\mathbb{Z}}\Bigg|&\left(\mathcal{L}^{M_{2}}\left\{e^{\frac{it}{B_{1}}}\phi(t)\right\}\right)\left(\frac{B_{2}\left(u+2k\pi|B_{1}|\right)}{B_{1}}\right)\Bigg|^2du\\
&\leq B\int_{0}^{2\pi|B_{1}|} \left|\tilde{\mathcal{L}}^{M_{1}}\left[c_{n}e^{\frac{2in}{B_{1}}}\right](u)\right|^2du,
\end{align*}
i.e.,
$$2\pi|B_{2}| A \|c_{n}\|^2_{l^2(\mathbb{Z})}\leq\|f\|^2_{L^2(\mathbb{R})}\leq 2\pi|B_{2}|B\|c_{n}\|^2_{l^2(\mathbb{Z})}.$$
Thus, $\left\{\phi_{M_{1},M_{2},0,n}\right\}_{n\in\mathbb{Z}}$ forms a Riesz basis of $V^{M_{1},M_{2}}_{0}.$

Conversely, let $\left\{\phi_{M_{1},M_{2},0,n}\right\}_{n\in\mathbb{Z}}$ is a Riesz basis of $V^{M_{1},M_{2}}_{0}$ then
 $$A_{0} \|c_{n}\|^2_{l^2(\mathbb{Z})}\leq\left\|f(t)=\sum_{n\in\mathbb{Z}}c_{n}\phi_{M_{1},M_{2},0,n}(t)\right\|^2_{L^2(\mathbb{R})}\leq B_{0}\|c_{n}\|^2_{l^2(\mathbb{Z})}.$$
Since $f(t)=\sum_{n\in\mathbb{Z}}c_{n}\phi_{M_{1},M_{2},0,n}(t),$ so we have
 $$\|f\|^2_{L^2(\mathbb{R})}=2\pi|B_{2}|\int_{0}^{2\pi|B_{1}|}\left|\tilde{\mathcal{L}}^{M_{1}}\left[c_{n}e^{\frac{2in}{B_{1}}}\right](u)\right|^2\sum_{k\in\mathbb{Z}}\left|\left(\mathcal{L}^{M_{2}}\left\{e^{\frac{it}{B_{1}}}\phi(t)\right\}\right)\left(\frac{B_{2}\left(u+2k\pi|B_{1}|\right)}{B_{1}}\right)\right|^2du.$$
 This gives
 $$A_{0}\|c_{n}\|^2_{l^2(\mathbb{Z})}\leq 2\pi|B_{2}|\int_{0}^{2\pi|B_{1}|}\left|\tilde{\mathcal{L}}^{M_{1}}\left[c_{n}e^{\frac{2in}{B_{1}}}\right](u)\right|^2\sum_{k\in\mathbb{Z}}\left|\left(\mathcal{L}^{M_{2}}\left\{e^{\frac{it}{B_{1}}}\phi(t)\right\}\right)\left(\frac{B_{2}\left(u+2k\pi|B_{1}|\right)}{B_{1}}\right)\right|^2du\leq B_{0}\|c_{n}\|^2_{l^2(\mathbb{Z})},$$
which further implies that 
$$A_{0}\|c'_{n}\|^2_{l^2(\mathbb{Z})}\leq 2\pi|B_{2}|\int_{0}^{2\pi|B_{1}|}\left|\tilde{\mathcal{L}}^{M_{1}}\left[c_{n}e^{\frac{2in}{B_{1}}}\right](u)\right|^2\sum_{k\in\mathbb{Z}}\left|\left(\mathcal{L}^{M_{2}}\left\{e^{\frac{it}{B_{1}}}\phi(t)\right\}\right)\left(\frac{B_{2}\left(u+2k\pi|B_{1}|\right)}{B_{1}}\right)\right|^2du\leq B_{0}\|c'_{n}\|^2_{l^2(\mathbb{Z})},$$
where $c'_{n}=c_{n}e^{\frac{2in}{B_{1}}}.$
\begin{align*}
\frac{A_{0}}{2\pi|B_{2}|}\int_{0}^{2\pi|B_{1}|}\left|\tilde{\mathcal{L}}^{M_{1}}[c_{n}e^{\frac{2in}{B_{1}}}](u)\right|^2du\leq \int_{0}^{2\pi|B_{1}|}\left|\tilde{\mathcal{L}}^{M_{1}}\left[c_{n}e^{\frac{2in}{B_{1}}}\right](u)\right|^2\sum_{k\in\mathbb{Z}}&\left|\left(\mathcal{L}^{M_{2}}\left\{e^{\frac{it}{B_{1}}}\phi(t)\right\}\right)\left(\frac{B_{2}\left(u+2k\pi|B_{1}|\right)}{B_{1}}\right)\right|^2du\\
&\leq \frac{B_{0}}{2\pi|B_{2}|}\int_{0}^{2\pi|B_{1}|}\left|\tilde{\mathcal{L}}^{M_{1}}\left[c_{n}e^{\frac{2in}{B_{1}}}\right](u)\right|^2du.
\end{align*}
Because of the arbitrariness of $\tilde{\mathcal{L}}^{M_{1}}[c_{n}e^{\frac{2in}{B_{1}}}](u),$ we have 
$$\frac{A_{0}}{2\pi|B_{2}|}\leq \sum_{k\in\mathbb{Z}}\left|\left(\mathcal{L}^{M_{2}}\left\{e^{\frac{it}{B_{1}}}\phi(t)\right\}\right)\left(\frac{B_{2}\left(u+2k\pi|B_{1}|\right)}{B_{1}}\right)\right|^2\leq \frac{B_{0}}{2\pi|B_{2}|},~\mbox{a.e.}~u\in[0,2\pi|B_1|]$$
i.e., 
$$A\leq \sum_{k\in\mathbb{Z}}\left|\left(\mathcal{L}^{M_{2}}\left\{e^{\frac{it}{B_{1}}}\phi(t)\right\}\right)\left(\frac{B_{2}\left(u+2k\pi|B_{1}|\right)}{B_{1}}\right)\right|^2\leq B,~\mbox{a.e.}~u\in[0,2\pi|B_1|]$$
where $A=\frac{A_{0}}{2\pi|B_{2}|}$ and $B=\frac{B_{0}}{2\pi|B_{2}|}.$\\
In particular $\left\{\phi_{M_{1},M_{2},0,n}\right\}_{n\in\mathbb{Z}}$ is an ONB of $V^{M_{1},M_{2}}_{0}$ iff $A=B=\frac{1}{2\pi |B_{2}|}.$ 
\end{proof}
\begin{theorem}
Let $\left\{V^{M_{1},M_{2}}_{k}\right\}_{k\in\mathbb{Z}}$ be a generalized MRA, with the scaling function $\phi.$ Let 
\begin{align}\label{P4GeneralizedMRAtoOrthoMRA}
\left(\mathcal{L}^{M_{2}}\left\{e^{\frac{it}{B_{1}}}\psi(t)\right\}\right)\left(\frac{B_{2}u}{B_{1}}\right)=\frac{\left(\mathcal{L}^{M_{2}}\left\{e^{\frac{it}{B_{1}}}\phi(t)\right\}\right)\left(\frac{B_{2}u}{B_{1}}\right)}{\sqrt{2\pi|B_{2}|\displaystyle\sum_{k\in\mathbb{Z}}\left|\left(\mathcal{L}^{M_{2}}\left\{e^{\frac{it}{B_{1}}}\phi(t)\right\}\right)\left(\frac{B_{2}\left(u+2k\pi|B_{1}|\right)}{B_{1}}\right)\right|^2}}.
\end{align} 
Then the set $\left\{\psi_{M_{1},M_{2},0,n}(t)=\psi(t-n)e^{-\frac{i}{2}\left\{\left(t^2-n^2\right)\frac{A_{1}}{B_{1}}-(t-n)^2\frac{A_{2}}{B_{2}}-\frac{2(t+n)}{B_{1}}\right\}}\right\}_{n\in\mathbb{Z}}$ form an ONB of $V^{M_{1},M_{2}}_{0}.$
\end{theorem}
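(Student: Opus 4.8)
The plan is to reduce the claim to the Riesz-basis characterization proved in the previous theorem, whose ``in particular'' clause says that $\{\psi_{M_1,M_2,0,n}\}_{n\in\mathbb{Z}}$ is an ONB of the closed subspace it generates exactly when its associated periodization equals $\frac{1}{2\pi|B_2|}$ almost everywhere. First I would fix the shorthand $\Phi(u)=\left(\mathcal{L}^{M_2}\{e^{it/B_1}\phi(t)\}\right)\!\left(\tfrac{B_2u}{B_1}\right)$ and write $G_\phi(u)=\sum_{k\in\mathbb{Z}}\bigl|\Phi(u+2k\pi|B_1|)\bigr|^2$ for the bracket sum. Since $\{\phi_{M_1,M_2,0,n}\}$ is a Riesz basis of $V^{M_1,M_2}_0$, the previous theorem gives $0<A\le G_\phi\le B<\infty$ a.e., and a shift of the summation index shows that $G_\phi$ is $2\pi|B_1|$-periodic.

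Next I would substitute the defining relation \eqref{P4GeneralizedMRAtoOrthoMRA}, namely $\left(\mathcal{L}^{M_2}\{e^{it/B_1}\psi\}\right)\!\left(\tfrac{B_2u}{B_1}\right)=\Phi(u)/\sqrt{2\pi|B_2|\,G_\phi(u)}$, into the periodization $G_\psi$ of $\psi$. Because $G_\phi(u+2k\pi|B_1|)=G_\phi(u)$ for every $k$, the denominator factors out of the sum and leaves $G_\psi(u)=\frac{1}{2\pi|B_2|G_\phi(u)}\sum_k|\Phi(u+2k\pi|B_1|)|^2=\frac{1}{2\pi|B_2|}$. This is precisely the equality $A=B=\frac{1}{2\pi|B_2|}$ isolated at the end of the previous theorem, so that result at once yields the orthonormality of $\{\psi_{M_1,M_2,0,n}\}$; the same splitting into periods, combined with Plancherel for the LCT, also gives $\|\psi\|^2_{L^2(\mathbb{R})}=\frac{1}{2\pi|B_1|}\int_0^{2\pi|B_1|}G_\phi(u)/G_\phi(u)\,du=1$, so $\psi\in L^2(\mathbb{R})$.

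The remaining and genuinely substantive point is that the generated subspace does not change. The previous theorem only delivers an ONB of $\overline{\mathrm{span}}\{\psi_{M_1,M_2,0,n}\}$, whereas I must identify this with $V^{M_1,M_2}_0=\overline{\mathrm{span}}\{\phi_{M_1,M_2,0,n}\}$. For this I would pass everything through $\mathcal{L}^{M_1}$: specializing \eqref{P4LCT-Kernel-LCST} to $a=1,\,b=n$ shows that for $f=\sum_n c_n\phi_{M_1,M_2,0,n}$ one has $\mathcal{L}^{M_1}f(u)=E(u)\Phi(u)P(u)$, where $E$ is a fixed nonvanishing factor and $P(u)=\sum_n c_n'e^{-inu/B_1}$ is $2\pi|B_1|$-periodic. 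The $\psi$-translates produce the same expression with $\Phi$ replaced by $\Psi=m\,\Phi$, where the multiplier $m(u)=1/\sqrt{2\pi|B_2|\,G_\phi(u)}$ is itself $2\pi|B_1|$-periodic. Since the Riesz bounds force both $m$ and $1/m$ to be bounded, multiplication by $m$ is a bijection of the periodic factors, whence the two closed spans coincide.

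I expect this last identification---checking that the bounded, bounded-below periodic multiplier $m$ neither shrinks nor enlarges the generated subspace---to be the real obstacle; the periodization identity is a one-line reindexing once the $2\pi|B_1|$-periodicity of $G_\phi$ is in place. Combining the orthonormality and normalization obtained above with the span identity then shows that $\{\psi_{M_1,M_2,0,n}\}_{n\in\mathbb{Z}}$ is an ONB of $V^{M_1,M_2}_0$, completing the proof.
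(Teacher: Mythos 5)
Your proposal is correct, and it pivots on the same key lemma as the paper's own proof: the periodization criterion from the preceding Riesz-basis theorem, namely that the twisted translates of a generator are orthonormal exactly when $\sum_{k\in\mathbb{Z}}\bigl|\bigl(\mathcal{L}^{M_{2}}\{e^{\frac{it}{B_{1}}}\psi(t)\}\bigr)\bigl(\tfrac{B_{2}(u+2k\pi|B_{1}|)}{B_{1}}\bigr)\bigr|^{2}=\tfrac{1}{2\pi|B_{2}|}$ a.e. The mechanics differ in two respects, both in your favor. First, you reach that identity in one step by inserting \eqref{P4GeneralizedMRAtoOrthoMRA} into the periodization and pulling the $2\pi|B_{1}|$-periodic factor $\sqrt{2\pi|B_{2}|G_{\phi}(u)}$ (your notation) out of the sum; the paper takes a longer road: it first asserts $\psi_{M_{1},M_{2},0,0}\in V^{M_{1},M_{2}}_{0}$, expands it as $\sum_{n}c_{n}\phi_{M_{1},M_{2},0,n}$, applies $\mathcal{L}^{M_{2}}$ to both sides to identify the periodic transfer function $\tilde{\mathcal{F}}[c'_{n}]$ with $1/\sqrt{2\pi|B_{2}|G_{\phi}}$, and only then computes the periodization --- a more roundabout route to the same equality. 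Second, and more substantively, you correctly isolate the point the paper glosses over: the criterion by itself only certifies that $\{\psi_{M_{1},M_{2},0,n}\}_{n\in\mathbb{Z}}$ is an ONB of its \emph{own} closed span, and identifying that span with $V^{M_{1},M_{2}}_{0}$ is a separate claim. Your multiplier argument --- under $\mathcal{L}^{M_{1}}$ both spaces consist of functions of the form $E\cdot\Phi\cdot P$, respectively $E\cdot m\Phi\cdot P$, with $P$ ranging over $L^{2}[0,2\pi|B_{1}|]$, and $m$, $1/m$ are bounded periodic by the Riesz bounds, so the two families of products coincide --- supplies exactly the missing step, and it is consistent with the representation the paper itself derives in the Riesz-basis theorem (its formula for $\mathcal{L}^{M_{1}}f$ in terms of $\tilde{\mathcal{L}}^{M_{1}}[c_{n}e^{2in/B_{1}}]$ and $\Phi$). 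The paper, by contrast, only places the single function $\psi_{M_{1},M_{2},0,0}$ in $V^{M_{1},M_{2}}_{0}$ and then concludes, so on the one genuinely substantive point your write-up is more complete than the published proof; the only price is that you must invoke the two-sided bounds on $G_{\phi}$, which your opening observation from the Riesz-basis theorem provides.
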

\begin{proof}
Using \eqref{P4GeneralizedMRAtoOrthoMRA} and $2\pi|B_1|$ periodicity of $\displaystyle\sum_{k\in\mathbb{Z}}\left|\left(\mathcal{L}^{M_{2}}\left\{e^{\frac{it}{B_{1}}}\phi(t)\right\}\right)\left(\frac{B_{2}\left(u+2k\pi|B_{1}|\right)}{B_{1}}\right)\right|^2$ we get
\begin{align*}
\psi_{M_{1},M_{2},0,0}(t)=\psi(t)e^{-\frac{i}{2}\left(t^2\frac{A_{1}}{B_{1}}-t^2\frac{A_{2}}{B_{2}}-\frac{2t}{B_{1}}\right)}\in V^{M_{1},M_{2}}_{0}.
\end{align*} 
So 
$$\psi_{M_{1},M_{2},0,0}(t)=\sum_{n\in\mathbb{Z}}c_{n}\phi_{M_{1},M_{2},0,n}(t)$$
i.e., 
$$\psi(t)e^{-\frac{i}{2}\left(t^2\frac{A_{1}}{B_{1}}-t^2\frac{A_{2}}{B_{2}}-\frac{2t}{B_{1}}\right)}=\sum_{n\in\mathbb{Z}}c_{n}\phi(t-n)e^{-\frac{i}{2}\left\{\left(t^2-n^2\right)\frac{A_{1}}{B_{1}}-(t-n)^2\frac{A_{2}}{B_{2}}-\frac{2(t+n)}{B_{1}}\right\}}.$$
This gives
\begin{align*}
\psi(t)e^{\frac{it}{B_{1}}}
&=\sum_{n\in\mathbb{Z}}c_{n}\phi(t-n)e^{-\frac{i}{2}\left\{\left(t^2-n^2\right)\frac{A_{1}}{B_{1}}-\left(t-n\right)^2\frac{A_{2}}{B_{2}}-\frac{2(t+n)}{B_{1}}\right\}}e^{\frac{i}{2}\left(t^2\frac{A_{1}}{B_{1}}-t^2\frac{A_{2}}{B_{2}}\right)}\\
&=\sum_{n\in\mathbb{Z}}c_{n}\phi(t-n)e^{\frac{it}{B_{1}}}e^{-\frac{i}{2}\left(-n^2\frac{A_{1}}{B_{1}}-n^2\frac{A_{2}}{B_{2}}+2tn\frac{A_{2}}{B_{2}}-\frac{2n}{B_1}\right)}\\
&=\sum_{n\in\mathbb{Z}}c_{n}e^{\frac{it}{B_{1}}-itn\frac{A_{2}}{B_{2}}}\phi(t-n)e^{\frac{i}{2}\left\{\left(\frac{A_{1}}{B_{1}}+\frac{A_{2}}{B_{2}}\right)n^2+\frac{2n}{B_1}\right\}}
\end{align*}
This implies
\begin{align}\label{P4theo4.3.Eqn1}
\left(\mathcal{L}^{M_{2}}\left\{\psi(t)e^{\frac{it}{B_{1}}}\right\}\right)(u)=\sum_{n\in\mathbb{Z}}c_{n}e^{\frac{i}{2}\left\{\left(\frac{A_{1}}{B_{1}}+\frac{A_{2}}{B_{2}}\right)n^2+\frac{2n}{B_1}\right\}}\left(\mathcal{L}^{M_{2}}\left\{e^{\frac{it}{B_{1}}-itn\frac{A_{2}}{B_{2}}}\phi(t-n)\right\}\right)(u).
\end{align}
Now, 
\begin{align*}
\left(\mathcal{L}^{M_{2}}\left\{e^{\frac{it}{B_{1}}-itn\frac{A_{2}}{B_{2}}}\phi(t-n)\right\}\right)(u)
&=\int_{\mathbb{R}}e^{\frac{it}{B_{1}}-itn\frac{A_{2}}{B_{2}}}\phi(t-n)e^{\frac{i}{2}\left(\frac{A_{2}}{B_{2}}t^2-\frac{2tu}{B_{2}}+\frac{D_{2}}{B_{2}}t^2\right)}dt\\
&=\int_{\mathbb{R}}\phi(t)e^{\frac{it}{B_{1}}+\frac{in}{B_{1}}-in^2\frac{A_{2}}{B_{2}}+\frac{i}{2}\left(\frac{A_{2}}{B_{2}}n^2-\frac{2nu}{B_{2}}\right)}K_{M_{2}}(t,u)dt\\
&=e^{\frac{in}{B_{1}}-in^2\frac{A_{2}}{2B_{2}}-\frac{inu}{B_{2}}}\left(\mathcal{L}^{M_{2}}\left\{e^{\frac{it}{B_{1}}}\phi(t)\right\}\right)(u).
\end{align*}
Therefore, we have
\begin{align*}
\left(\mathcal{L}^{M_{2}}\left\{\psi(t)e^{\frac{it}{B_{1}}}\right\}\right)(u)
&=\sum_{n\in\mathbb{Z}}c_{n}e^{\frac{i}{2}\left\{\left(\frac{A_{1}}{B_{1}}+\frac{A_{2}}{B_{2}}\right)n^2+\frac{2n}{B_1}\right\}}e^{\frac{in}{B_{1}}-in^2\frac{A_{2}}{2B_{2}}-\frac{inu}{B_{2}}}\left(\mathcal{L}^{M_{2}}\left\{e^{\frac{it}{B_{1}}}\phi(t)\right\}\right)(u)\\
&=\sum_{n\in\mathbb{Z}}c_{n}e^{in^2\frac{A_{2}}{2B_{2}}+\frac{2in}{B_{1}}-\frac{inu}{B_{2}}}\left(\mathcal{L}^{M_{2}}\left\{e^{\frac{it}{B_{1}}}\phi(t)\right\}\right)(u)\\
&=\sum_{n\in\mathbb{Z}}c'_{n}e^{-\frac{inu}{B_{2}}}\left(\mathcal{L}^{M_{2}}\left\{e^{\frac{it}{B_{1}}}\phi(t)\right\}\right)(u),~\mbox{where}~c'_{n}=c_{n}e^{in^2\frac{A_{2}}{2B_{2}}+\frac{2in}{B_{1}}}\\
&=\left(\mathcal{L}^{M_{2}}\left\{e^{\frac{it}{B_{1}}}\phi(t)\right\}\right)(u)\sum_{n\in\mathbb{Z}}c'_{n}e^{-in\left(\frac{u}{B_{2}}\right)}\\
&=\left(\mathcal{L}^{M_{2}}\left\{e^{\frac{it}{B_{1}}}\phi(t)\right\}\right)(u)\tilde{\mathcal{F}}[c'_{n}]\left(\frac{u}{B_{2}}\right),~\mbox{where}~\tilde{\mathcal{F}}[c'_{n}]\left(\frac{u}{B_{2}}\right)=\sum_{n\in\mathbb{Z}}c'_{n}e^{-in\left(\frac{u}{B_{2}}\right)}.
\end{align*}
i.e.,
$$\left(\mathcal{L}^{M_{2}}\left\{\psi(t)e^{\frac{it}{B_{1}}}\right\}\right)\left(\frac{B_{2}u}{B_{1}}\right)=\left(\mathcal{L}^{M_{2}}\left\{e^{\frac{it}{B_{1}}}\phi(t)\right\}\right)\left(\frac{B_{2}u}{B_{1}}\right)\tilde{\mathcal{F}}[c'_{n}]\left(\frac{u}{B_{1}}\right).$$
This gives
$$\tilde{\mathcal{F}}[c'_{n}]\left(\frac{u}{B_{1}}\right)=\frac{\left(\mathcal{L}^{M_{2}}\left\{e^{\frac{it}{B_{1}}}\psi(t)\right\}\right)\left(\frac{B_{2}u}{B_{1}}\right)}{\left(\mathcal{L}^{M_{2}}\left\{e^{\frac{it}{B_{1}}}\phi(t)\right\}\right)\left(\frac{B_{2}u}{B_{1}}\right)},$$ 
so using the given hypothesis we have
\begin{align}\label{P4theo4.3.Eqn2}
\tilde{\mathcal{F}}[c'_{n}]\left(\frac{u}{B_{1}}\right)=\frac{1}{\sqrt{2\pi|B_{2}|\displaystyle\sum_{k\in\mathbb{Z}}\left|\left(\mathcal{L}^{M_{2}}\left\{e^{\frac{it}{B_{1}}}\phi(t)\right\}\right)\left(\frac{B_{2}\left(u+2k\pi|B_{1}|\right)}{B_{1}}\right)\right|^2}}.
\end{align}
Since $\tilde{\mathcal{F}}[c'_{n}]\left(\frac{u}{B_{1}}\right)$ is $2\pi|B_{1}|$ periodic, we have
$$\left|\left(\mathcal{L}^{M_{2}}\left\{\psi(t)e^{\frac{it}{B_{1}}}\right\}\right)\left(\frac{B_{2}\left(u+2k\pi|B_{1}|\right)}{B_{1}}\right)\right|^2=\left|\left(\mathcal{L}^{M_{2}}\left\{e^{\frac{it}{B_{1}}}\phi(t)\right\}\right)\left(\frac{B_{2}\left(u+2k\pi|B_{1}|\right)}{B_{1}}\right)\right|^2\left|\tilde{\mathcal{F}}[c'_{n}]\left(\frac{u}{B_{1}}\right)\right|^2$$
i.e., 
\begin{align}\label{P4theo4.3.Eqn3}
\sum_{n\in\mathbb{Z}}\left|\left(\mathcal{L}^{M_{2}}\left\{\psi(t)e^{\frac{it}{B_{1}}}\right\}\right)\left(\frac{B_{2}\left(u+2k\pi|B_{1}|\right)}{B_{1}}\right)\right|^2=\left|\tilde{\mathcal{F}}[c'_{n}]\left(\frac{u}{B_{1}}\right)\right|^2\sum_{n\in\mathbb{Z}}\left|\left(\mathcal{L}^{M_{2}}\left\{e^{\frac{it}{B_{1}}}\phi(t)\right\}\right)\left(\frac{B_{2}\left(u+2k\pi|B_{1}|\right)}{B_{1}}\right)\right|^2.
\end{align}
Thus, from \eqref{P4theo4.3.Eqn2} and \eqref{P4theo4.3.Eqn3}, we have 
$$\sum_{n\in\mathbb{Z}}\left|\left(\mathcal{L}^{M_{2}}\left\{\psi(t)e^{\frac{it}{B_{1}}}\right\}\right)\left(\frac{B_{2}(u+2k\pi|B_{1}|)}{B_{1}}\right)\right|^2=\frac{1}{2\pi|B_{2}|}.$$
This shows that $\left\{\psi_{M_{1},M_{2},0,n}\right\}_{n\in\mathbb{Z}}$ is an ONB of $V^{M_{1},M_{2}}_{0}.$
\end{proof}

\subsection{Construction of orthonormal linear canonical wavelets}
Based on the developed MRA, the orthonormal linear canonical wavelet basis can be constructed. Let $W_{k}^{M_{1},M_{2}}$ be the orthogonal complement of $V_{k}^{M_{1},M_{2}}$ in $V_{k+1}^{M_{1},M_{2}},$ i.e.,
$$W_{k}^{M_{1},M_{2}}\perp V_{k}^{M_{1},M_{2}},~V_{k+1}^{M_{1},M_{2}}=W_{k}^{M_{1},M_{2}}\bigoplus V_{k}^{M_{1},M_{2}},~\forall~k\in\mathbb{Z}.$$ \\
Then using Definition \ref{P4DefinitionMRA}, we have the following properties of $\left\{W_{k}^{M_{1},M_{2}}\right\}_{k\in\mathbb{Z}}.$
\begin{enumerate}
\item\label{P4DefinitionMRA1} $W_{k}^{M_{1},M_{2}}\perp V_{k}^{M_{1},M_{2}},~\forall~k\neq l;$
\item\label{P4DefinitionMRA2} $\displaystyle \bigoplus_{k\in\mathbb{Z}}W_{k}^{M_{1},M_{2}}=L^2(\mathbb{R});$
\item\label{P4DefinitionMRA3} $g(t)\in W_{k}^{M_{1},M_{2}}\iff g(2t)e^{\frac{iA_{1}}{2B_{1}}\left\{(2t)^2-t^2\right\}}\in W_{k+1}^{M_{1},M_{2}},~\forall~k\in\mathbb{Z}.$
\end{enumerate}
By determining an ONB of the subspace $W_{k}^{M_{1},M_{2}},$ one can construct an ONB of $L^2(\mathbb{R})$ according to property \eqref{P4DefinitionMRA2}.

According to property \eqref{P4DefinitionMRA3} these orthonormal bases are constructed by finding out the same for $W_{0}^{M_{1},M_{2}}.$ Thus, the crucial part is constructing $\psi(t)\in L^2(\mathbb{R})$ for which $\left\{\psi_{M_{1},M_{2},0,n}=\psi(t-n)e^{-\frac{i}{2}\left\{\left(t^2-n^2\right)\frac{A_{1}}{B_{1}}-\left(t-n\right)^2\frac{A_{2}}{B_{2}}-\frac{2(t+n)}{B_{1}}\right\}}\right\}_{n\in\mathbb{Z}}$ form an ONB of $W_{0}^{M_{1},M_{2}}.$ 
We call the basis $\{\psi_{M_1,M_2,m,n}(t)\}_{m,n\in\mathbb{Z}},$ where
\begin{align*}
\psi_{M_{1},M_{2},m,n}(t)=2^{\frac{m}{2}}\psi(2^mt-n)e^{-\frac{i}{2}\left\{\left(t^2-\left(\frac{n}{2^m}\right)^2\right)\frac{A_{1}}{B_{1}}-\left(2^mt-n\right)^2\frac{A_{2}}{B_{2}}-\frac{2(2^mt+n)}{B_{1}}\right\}},~m,n\in\mathbb{Z},
\end{align*} 
the orthonormal linear canonical wavelet basis of $L^2(\mathbb{R}).$

Since $\left\{\phi_{M_{1},M_{2},1,n}(t)=2^{\frac{1}{2}}\phi(2t-n)e^{-\frac{i}{2}\left\{\left(t^2-\left(\frac{n}{2}\right)^2\right)\frac{A_{1}}{B_{1}}-(2t-n)^2\frac{A_{2}}{B_{2}}-\frac{2(2t+n)}{B_{1}}\right\}}\right\}_{n\in\mathbb{Z}}$ form an ONB of $V^{M_{1},M_{2}}_{1}$ and also $\phi_{M_{1},M_{2},0,0}(t)=\phi(t)e^{-\frac{i}{2}\left\{\left(\frac{A_{1}}{B_{1}}-\frac{A_{2}}{B_{2}}\right)t^2-\frac{2t}{B_{1}}\right\}}\in V^{M_{1},M_{2}}_{0}\subset V^{M_{1},M_{2}}_{1}$ there exists $\{c_{n}\}_{n\in\mathbb{Z}}\in l^2(\mathbb{Z})$ for which 
$$\phi_{M_{1},M_{2},0,0}(t)=\sum_{n\in\mathbb{Z}}c_{n}\phi_{M_{1},M_{2},1,n}(t).$$
This implies 
$$\phi(t)e^{-\frac{i}{2}\left\{\left(\frac{A_{1}}{B_{1}}-\frac{A_{2}}{B_{2}}\right)t^2-\frac{2t}{B_{1}}\right\}}=\sum_{n\in\mathbb{Z}}c_{n}2^{\frac{1}{2}}\phi(2t-n)e^{-\frac{i}{2}\left\{\left(t^2-\left(\frac{n}{2}\right)^2\right)\frac{A_{1}}{B_{1}}-\left(2t-n\right)^2\frac{A_{2}}{B_{2}}-\frac{2(2t+n)}{B_{1}}\right\}}$$
\begin{align*}
\phi(t)e^{\frac{it}{B_{1}}}
&=\sum_{n\in\mathbb{Z}}c_{n}2^{\frac{1}{2}}\phi(2t-n)e^{-\frac{i}{2}\left\{-\left(\frac{n}{2}\right)^2\frac{A_{1}}{B_{1}}-\left(2t-n\right)^2\frac{A_{2}}{B_{2}}-\frac{2(2t+n)}{B_{1}}+\frac{A_{2}}{B_{2}}t^2\right\}}\\
&=\sum_{n\in\mathbb{Z}}c_{n}2^{\frac{1}{2}}\phi(2t-n)e^{\frac{i}{2}\left\{\left(\frac{n}{2}\right)^2\frac{A_{1}}{B_{1}}+\left(3t^2-4tn+n^2\right)\frac{A_{2}}{B_{2}}+\frac{2(2t+n)}{B_{1}}\right\}},
\end{align*}
where 
$$c_{n}=2^{\frac{1}{2}}e^{-\frac{i}{2}\left(\frac{n^2A_{1}}{4B_{1}}+\frac{n^2A_{2}}{B_{2}}+\frac{2n}{B_1}\right)}\int_{\mathbb{R}}\phi(t)\overline{\phi(2t-n)}e^{-\frac{i}{2}\left\{\left(3t^2-4tn\right)\frac{A_{2}}{B_{2}}+\frac{2t}{B_{1}}\right\}}dt.$$
\begin{align*}
\Big(\mathcal{L}^{M_{2}}&\left\{e^{\frac{it}{B_{1}}}\phi(t)\right\}\Big)(u)\\
&=2^{\frac{1}{2}}\sum_{n\in\mathbb{Z}}c_{n}\int_{\mathbb{R}}\phi(2t-n)e^{\frac{i}{2}\left\{\left(\frac{n}{2}\right)^2\frac{A_{1}}{B_{1}}+\left(3t^2-4tn+n^2\right)\frac{A_{2}}{B_{2}}+\frac{2(2t+n)}{B_{1}}\right\}}K_{M_{2}}(t,u)dt\\
&=\frac{2^{\frac{1}{2}}}{\sqrt{2\pi iB_{2}}}\sum_{n\in\mathbb{Z}}c_{n}\int_{\mathbb{R}}\phi(2t-n)e^{\frac{i}{2}\left\{\frac{n^2A_{1}}{4B_{1}}+\left(3t^2-4tn+n^2\right)\frac{A_{2}}{B_{2}}+\frac{2(2t+n)}{B_{1}}+\left(\frac{A_{1}}{B_{1}}t^2-\frac{2tu}{B_{2}}+\frac{D_{2}}{B_{2}}u^2\right)\right\}}dt\\
&=\frac{2^{\frac{1}{2}}}{\sqrt{2\pi iB_{2}}}\sum_{n\in\mathbb{Z}}c_{n}\int_{\mathbb{R}}e^{\frac{i(2t-n)}{B_{1}}}\phi(2t-n)e^{\frac{i}{2}\left(\frac{n^2A_{1}}{4B_{1}}\right)+\frac{2in}{B_{1}}}  e^{\frac{i}{2}\left\{(2t-n)^2\frac{A_{2}}{B_{2}}-\frac{2tu}{B_{2}}+\frac{D_{2}}{B_{2}}u^2\right\}}dt\\
&=\frac{2^{\frac{1}{2}}}{\sqrt{2\pi iB_{2}}}\sum_{n\in\mathbb{Z}}c_{n}\int_{\mathbb{R}}e^{\frac{i(2t-n)}{B_{1}}}\phi(2t-n)e^{\frac{i}{2}\left(\frac{n^2A_{1}}{4B_{1}}\right)+\frac{2in}{B_{1}}}  e^{\frac{i}{2}\left\{(2t-n)^2\frac{A_{2}}{B_{2}}-\frac{2(2t-n)}{B_{2}}\left(\frac{u}{2}\right)+\frac{D_{2}}{B_{2}}\left(\frac{u}{2}\right)^2\right\}}e^{\frac{i}{2}\left\{-\frac{nu}{B_{2}}+\frac{3D_{2}}{B_{2}}\left(\frac{u}{2}\right)^2\right\}}dt\\
&=\frac{2^{\frac{1}{2}}}{\sqrt{2\pi iB_{2}}}\sum_{n\in\mathbb{Z}}\frac{c_{n}}{2}\int_{\mathbb{R}}e^{\frac{it}{B_{1}}}\phi(t)e^{\frac{i}{2}\left(\frac{n^2A_{1}}{4B_{1}}\right)+\frac{2in}{B_{1}}}e^{\frac{i}{2}\left\{\frac{A_{2}}{B_{2}}t^2-\frac{2t}{B_{2}}\left(\frac{u}{2}\right)+\frac{D_{2}}{B_{2}}\left(\frac{u}{2}\right)^2\right\}}e^{\frac{i}{2}\left\{-\frac{nu}{B_{2}}+\frac{3D_{2}}{B_{2}}\left(\frac{u}{2}\right)^2\right\}}dt\\
&=\frac{2^{\frac{1}{2}}}{2}\sum_{n\in\mathbb{Z}}c_{n}\int_{\mathbb{R}}e^{\frac{it}{B_{1}}}\phi(t)e^{\frac{i}{2}\left\{\frac{n^2A_{1}}{4B_{1}}+\frac{4n}{B_{1}}-\frac{nu}{B_{3}}+\frac{3D_{2}}{B_{2}}\left(\frac{u}{2}\right)^2\right\}}K_{M_{2}}\left(t,\frac{u}{2}\right)dt\\
&=\frac{2^{\frac{1}{2}}}{2}\sum_{n\in\mathbb{Z}}c_{n}e^{\frac{i}{2}\left\{\frac{n^2A_{1}}{4B_{1}}+\frac{4n}{B_{1}}-\frac{nu}{B_{3}}+\frac{3D_{2}}{B_{2}}\left(\frac{u}{2}\right)^2\right\}}\left(\mathcal{L}^{M_{2}}\left\{e^{\frac{it}{B_{1}}}\phi(t)\right\}\right)\left(\frac{u}{2}\right)\\
&=e^{\frac{i}{2}\left(\frac{3u^2D_{2}}{4B_{2}}\right)}\bigwedge^{M_{1},M_{2}}\left(\frac{u}{2}\right)\left(\mathcal{L}^{M_{2}}\left\{e^{\frac{it}{B_{1}}}\phi(t)\right\}\right)\left(\frac{u}{2}\right),
\end{align*}
where
\begin{align}\label{P4Construction-WaveletEqn1}
\bigwedge^{M_{1},M_{2}}(u)=\frac{\sqrt{2}}{2}\sum_{n\in\mathbb{Z}}c_{n}e^{\frac{i}{2}\left(\frac{n^2A_{1}}{4B_{1}}+\frac{4n}{B_{1}}-\frac{2nu}{B_{2}}\right)}.
\end{align}
Therefore,
$$\left(\mathcal{L}^{M_{2}}\left\{e^{\frac{it}{B_{1}}}\phi(t)\right\}\right)\left(\frac{B_{2}u}{B_{1}}\right)=e^{\frac{i}{2}\left(\frac{3D_{2}}{4B_{2}}\right)\left(\frac{B_{2}u}{B_{1}}\right)^2}\bigwedge^{M_{1},M_{2}}\left(\frac{B_{2}u}{2B_{1}}\right)\left(\mathcal{L}^{M_{2}}\left\{e^{\frac{it}{B_{1}}}\phi(t)\right\}\right)\left(\frac{B_{2}u}{2B_{1}}\right)$$
Since $\left\{\phi_{M_{1},M_{2},0,n}(t)=\phi(t-n)e^{-\frac{i}{2}\left\{(t^2-n^2)\frac{A_{1}}{B_{1}}-(t-n)^2\frac{A_{2}}{B_{2}}-\frac{2(t+n)}{B_{1}}\right\}}\right\}_{n\in\mathbb{Z}}$ is an ONB of $V^{M_{1},M_{2}}_{0},$ so we have 
$$\sum_{k\in\mathbb{Z}}\left|\left(\mathcal{L}^{M_{2}}\left\{e^{\frac{it}{B_{1}}}\phi(t)\right\}\right)\left(\frac{B_{2}}{B_{1}}\left(u+2k\pi|B_{1}|\right)\right)\right|^2=\frac{1}{2\pi|B_{2}|}.$$
Moreover,
\begin{align}\label{P4Construction-WaveletEqn2}
\sum_{k\in\mathbb{Z}}&\left|\left(\mathcal{L}^{M_{2}}\{e^{\frac{it}{B_{1}}}\phi(t)\}\right)\left(\frac{B_{2}}{B_{1}}\left(u+2k\pi|B_{1}|\right)\right)\right|^2\notag\\
&=\sum_{k\in\mathbb{Z}}\left|\bigwedge^{M_{1},M_{2}}\left(\frac{B_{2}}{2B_{1}}\left(u+2k\pi|B_{1}|\right)\right)\right|^2\left|\left(\mathcal{L}^{M_{2}}\left\{e^{\frac{it}{B_{1}}}\phi(t)\right\}\right)\left(\frac{B_{2}}{2B_{1}}(u+2k\pi|B_{1}|)\right)\right|^2\notag\\
&=\sum_{l\in\mathbb{Z}}\left|\bigwedge^{M_{1},M_{2}}\left(\frac{B_{2}}{2B_{1}}\left(u+4l\pi|B_{1}|\right)\right)\right|^2\left|\left(\mathcal{L}^{M_{2}}\left\{e^{\frac{it}{B_{1}}}\phi(t)\right\}\right)\left(\frac{B_{2}}{2B_{1}}\left(u+4l\pi|B_{1}|\right)\right)\right|^2\notag\\
&\hspace{1cm}+\sum_{l\in\mathbb{Z}}\left|\bigwedge^{M_{1},M_{2}}\left(\frac{B_{2}}{2B_{1}}\left(u+(4l+2)\pi|B_{1}|\right)\right)\right|^2\left|\left(\mathcal{L}^{M_{2}}\left\{e^{\frac{it}{B_{1}}}\phi(t)\right\}\right)\left(\frac{B_{2}}{2B_{1}}\left(u+(4l+2)\pi|B_{1}|\right)\right)\right|^2\notag\\
&=\sum_{l\in\mathbb{Z}}\left|\bigwedge^{M_{1},M_{2}}\left(\frac{B_{2}}{B_{1}}\left(\frac{u}{2}+2l\pi|B_{1}|\right)\right)\right|^2\left|\left(\mathcal{L}^{M_{2}}\left\{e^{\frac{it}{B_{1}}}\phi(t)\right\}\right)\left(\frac{B_{2}}{B_{1}}\left(\frac{u}{2}+2l\pi|B_{1}|\right)\right)\right|^2\notag\\
&\hspace{1cm}+\sum_{l\in\mathbb{Z}}\left|\bigwedge^{M_{1},M_{2}}\left(\frac{B_{2}}{B_{1}}\left(\frac{u}{2}+(2l+1)\pi|B_{1}|\right)\right)\right|^2\left|\left(\mathcal{L}^{M_{2}}\left\{e^{\frac{it}{B_{1}}}\phi(t)\right\}\right)\left(\frac{B_{2}}{B_{1}}\left(\frac{u}{2}+(2l+1)\pi|B_{1}|\right)\right)\right|^2
\end{align}
Since $\bigwedge^{M_{1},M_{2}}\left(\frac{B_{2}u}{B_{1}}\right)$ is $2\pi|B_{1}|$ periodic, from \eqref{P4Construction-WaveletEqn2}, we have
\begin{align*}
\sum_{k\in\mathbb{Z}}&\left|\left(\mathcal{L}^{M_{2}}\left\{e^{\frac{it}{B_{1}}}\phi(t)\right\}\right)\left(\frac{B_{2}}{B_{1}}\left(u+2k\pi|B_{1}|\right)\right)\right|^2\\
&=\sum_{l\in\mathbb{Z}}\left|\bigwedge^{M_{1},M_{2}}\left(\frac{B_{2}u}{2B_{1}}\right)\right|^2\left|\left(\mathcal{L}^{M_{2}}\left\{e^{\frac{it}{B_{1}}}\phi(t)\right\}\right)\left(\frac{B_{2}}{B_{1}}(\frac{u}{2}+2l\pi|B_{1}|)\right)\right|^2\\
&\hspace{1cm}+\sum_{l\in\mathbb{Z}}\left|\bigwedge^{M_{1},M_{2}}\left(\frac{B_{2}}{B_{1}}\left(\frac{u}{2}+\pi|B_{1}|\right)\right)\right|^2\left|\left(\mathcal{L}^{M_{2}}\left\{e^{\frac{it}{B_{1}}}\phi(t)\right\}\right)\left(\frac{B_{2}}{B_{1}}\left(\frac{u}{2}+(2l+1)\pi|B_{1}|\right)\right)\right|^2\\
&=\left|\bigwedge^{M_{1},M_{2}}\left(\frac{B_{2}u}{2B_{1}}\right)\right|^2\sum_{l\in\mathbb{Z}}\left|\left(\mathcal{L}^{M_{2}}\left\{e^{\frac{it}{B_{1}}}\phi(t)\right\}\right)\left(\frac{B_{2}}{B_{1}}(\frac{u}{2}+2l\pi|B_{1}|)\right)\right|^2\\
&\hspace{1cm}+\left|\bigwedge^{M_{1},M_{2}}\left(\frac{B_{2}}{B_{1}}\left(\frac{u}{2}+\pi|B_{1}|\right)\right)\right|^2\sum_{l\in\mathbb{Z}}\left|\left(\mathcal{L}^{M_{2}}\left\{e^{\frac{it}{B_{1}}}\phi(t)\right\}\right)\left(\frac{B_{2}}{B_{1}}\left(\frac{u}{2}+(2l+1)\pi|B_{1}|\right)\right)\right|^2.
\end{align*}
Since 
$$\sum_{k\in\mathbb{Z}}\left|\left(\mathcal{L}^{M_{2}}\left\{e^{\frac{it}{B_{1}}}\phi(t)\right\}\right)\left(\frac{B_{2}}{B_{1}}\left(u+2k\pi|B_{1}|\right)\right)\right|^2=\frac{1}{2\pi|B_{2}|},$$
$$\sum_{l\in\mathbb{Z}}\left|\left(\mathcal{L}^{M_{2}}\left\{e^{\frac{it}{B_{1}}}\phi(t)\right\}\right)\left(\frac{B_{2}}{B_{1}}\left(\frac{u}{2}+2l\pi|B_{1}|\right)\right)\right|^2=\frac{1}{2\pi|B_{2}|}$$
and
$$\sum_{l\in\mathbb{Z}}\left|\left(\mathcal{L}^{M_{2}}\left\{e^{\frac{it}{B_{1}}}\phi(t)\right\}\right)\left(\frac{B_{2}}{B_{1}}\left(\frac{u}{2}+(2l+1)\pi|B_{1}|\right)\right)\right|^2=\frac{1}{2\pi|B_{2}|}.$$
Therefore, we get
$$\left|\bigwedge^{M_{1},M_{2}}\left(\frac{B_{2}u}{2B_{1}}\right)\right|^2+\left|\bigwedge^{M_{1},M_{2}}\left(\frac{B_{2}}{B_{1}}\left(\frac{u}{2}+\pi|B_{1}|\right)\right)\right|^2=1.$$
Equivalently, we have
$$\left|\bigwedge^{M_{1},M_{2}}\left(\frac{B_{2}u}{B_{1}}\right)\right|^2+\left|\bigwedge^{M_{1},M_{2}}\left(\frac{B_{2}}{B_{1}}\left(u+\pi|B_{1}|\right)\right)\right|^2=1.$$
Since $\psi_{M_{1},M_{2},0,0}(t)=\psi(t)e^{-\frac{i}{2}\left\{\left(\frac{A_{1}}{B_{1}}-\frac{A_{2}}{B_{2}}\right)t^2-\frac{2t}{B_{1}}\right\}}\in W^{M_{1},M_{2}}_{0}\subset V^{M_{1},M_{2}}_{1},$ there exists $\{d_{n}\}\in l^2(\mathbb{Z})$ such that 
\begin{align}\label{P4WaveletEquation}
\psi_{M_{1},M_{2},0,0}(t)=\sum_{n\in\mathbb{Z}}d_{n}\phi_{M_{1},M_{2},1,n}(t).
\end{align}
On equation \eqref{P4WaveletEquation} we take $\mathcal{L}^{M_{2}}$ and proceed as above to get 
$$\left|\Gamma^{M_{1},M_{2}}\left(\frac{B_{2}u}{B_{1}}\right)\right|^2+\left|\Gamma^{M_{1},M_{2}}\left(\frac{B_{2}}{B_{1}}\left(u+\pi|B_{1}|\right)\right)\right|^2=1,$$
where 
\begin{align}\label{P4Construction-WaveletEqn3}
\displaystyle\Gamma^{M_{1},M_{2}}(u)=\frac{\sqrt{2}}{2}\sum_{n\in\mathbb{Z}}d_{n}e^{\frac{i}{2}\left(\frac{n^2A_{1}}{4B_{1}}+\frac{4n}{B_{1}}-\frac{2nu}{B_{2}}\right)}.
\end{align}
Moreover, since $W^{M_{1},M_{2}}_{0}$ and $V^{M_{1},M_{2}}_{0}$ are orthogonal in $V^{M_{1},M_{2}}_{1}$ so 
$$\langle\phi_{M_{1},M_{2},0,m},\psi_{M_{1},M_{2},0,n} \rangle_{L^2(\mathbb{R})}=0,~\forall m,n\in\mathbb{Z}.$$
This gives 
$$\langle\mathcal{L}^{M_{1}}\phi_{M_{1},M_{2},0,m},\mathcal{L}^{M_{1}}\psi_{M_{1},M_{2},0,n} \rangle_{L^2(\mathbb{R})}=0,~\forall m,n\in\mathbb{Z}.$$
Now
\begin{align}\label{P4Construction-WaveletEqn4}
\langle&\mathcal{L}^{M_{1}}\phi_{M_{1},M_{2},0,m},\mathcal{L}^{M_{1}}\psi_{M_{1},M_{2},0,n}\rangle_{L^2(\mathbb{R})}\notag\\
&=2\pi|B_{2}|\int_{\mathbb{R}}e^{\frac{2i}{B_{2}}(n-m)}K_{M_{1}}(n,u)\overline{K_{M_{1}}(m,u)}\left(\mathcal{L}^{M_{1}}\left\{e^{\frac{it}{B_{1}}}\phi(t)\right\}\right)\left(\frac{B_{2}u}{B_{1}}\right)\overline{\left(\mathcal{L}^{M_{1}}\{e^{\frac{it}{B_{1}}}\psi(t)\}\right)\left(\frac{B_{2}u}{B_{1}}\right)}du\notag\\
&=2\pi|B_{2}|e^{\frac{2i}{B_{2}}(n-m)}\int_{\mathbb{R}}\bigwedge^{M_{1},M_{2}}\left(\frac{B_{2}u}{2B_{1}}\right)\overline{\Gamma^{M_{1},M_{2}}\left(\frac{B_{2}u}{2B_{1}}\right)}\left|\left(\mathcal{L}^{M_{2}}\left\{e^{\frac{it}{B_{1}}}\phi(t)\right\}\right)\left(\frac{B_{2}u}{2B_{1}}\right)\right|^2K_{M_{1}}(n,u)\overline{K_{M_{1}}(m,u)}du\notag\\
&=2\pi|B_{2}|e^{\frac{2i}{B_{2}}(n-m)}\sum_{k\in\mathbb{Z}}\int_{4\pi|B_{1}|}^{4(k+1)\pi|B_{1}|}\bigwedge^{M_{1},M_{2}}\left(\frac{B_{2}u}{2B_{1}}\right)\overline{\Gamma^{M_{1},M_{2}}\left(\frac{B_{2}u}{2B_{1}}\right)}\left|\left(\mathcal{L}^{M_{2}}\left\{e^{\frac{it}{B_{1}}}\phi(t)\right\}\right)\left(\frac{B_{2}u}{2B_{1}}\right)\right|^2\notag\\
&\hspace{11cm}\times K_{M_{1}}(n,u)\overline{K_{M_{1}}(m,u)}du\notag\\
&=2\pi|B_{2}|e^{\frac{2i}{B_{2}}(n-m)}\sum_{k\in\mathbb{Z}}\int_{0}^{4\pi|B_{1}|}\bigwedge^{M_{1},M_{2}}\left(\frac{B_{2}}{2B_{1}}\left(u+4k\pi|B_{1}|\right)\right)\overline{\Gamma^{M_{1},M_{2}}\left(\frac{B_{2}}{2B_{1}}\left(u+4k\pi|B_{1}|\right)\right)}\notag\\
&\hspace{3.5cm}\times \left|\left(\mathcal{L}^{M_{2}}\left\{e^{\frac{it}{B_{1}}}\phi(t)\right\}\right)\left(\frac{B_{2}}{2B_{1}}\left(u+4k\pi|B_{1}|\right)\right)\right|^2K_{M_{1}}(n,u+4k\pi|B_{1}|)\overline{K_{M_{1}}(m,u+4k\pi|B_{1}|)}du\notag\\
&=2\pi|B_{2}|e^{\frac{2i}{B_{2}}(n-m)}\sum_{k\in\mathbb{Z}}\int_{0}^{4\pi|B_{1}|}\bigwedge^{M_{1},M_{2}}\left(\frac{B_{2}u}{2B_{1}}\right)\overline{\Gamma^{M_{1},M_{2}}\left(\frac{B_{2}u}{2B_{1}}\right)}\left|\left(\mathcal{L}^{M_{2}}\left\{e^{\frac{it}{B_{1}}}\phi(t)\right\}\right)\left(\frac{B_{2}}{2B_{1}}\left(u+4k\pi|B_{1}|\right)\right)\right|^2\notag\\
&\hspace{9cm}\times K_{M_{1}}(n,u+4k\pi|B_{1}|)\overline{K_{M_{1}}(m,u+4k\pi|B_{1}|)}du,
\end{align}
by $2\pi|B_{1}|$ periodicity of $\bigwedge^{M_{1},M_{2}}\left(\frac{B_{2}u}{B_{1}}\right)~\mbox{and}~\Gamma^{M_{1},M_{2}}\left(\frac{B_{2}u}{B_{1}}\right).$\\
Since $K_{M_{1}}(n,u+4k\pi|B_{1}|)\overline{K_{M_{1}}(m,u+4k\pi|B_{1}|)}=K_{M_{1}}(n,u)\overline{K_{M_{1}}(m,u)},$ from \eqref{P4Construction-WaveletEqn4} we have
\begin{align*}
&\left\langle\mathcal{L}^{M_{1}}\phi_{M_{1},M_{2},0,m},\mathcal{L}^{M_{1}}\psi_{M_{1},M_{2},0,n}\right\rangle_{L^2(\mathbb{R})}\\
&=2\pi|B_{2}|e^{\frac{2i}{B_{2}}(n-m)}\sum_{k\in\mathbb{Z}}\int_{0}^{4\pi|B_{1}|}\bigwedge^{M_{1},M_{2}}\left(\frac{B_{2}u}{2B_{1}}\right)\overline{\Gamma^{M_{1},M_{2}}\left(\frac{B_{2}u}{2B_{1}}\right)}\left|\left(\mathcal{L}^{M_{2}}\{e^{\frac{it}{B_{1}}}\phi(t)\}\right)\left(\frac{B_{2}}{2B_{1}}\left(u+4k\pi|B_{1}|\right)\right)\right|^2\notag\\
&\hspace{13cm}\times K_{M_{1}}(m,u)\overline{K_{M_{1}}(n,u)}du\\
&=2\pi|B_{2}|e^{\frac{2i}{B_{2}}(n-m)}\int_{0}^{4\pi|B_{1}|}\bigwedge^{M_{1},M_{2}}\left(\frac{B_{2}u}{2B_{1}}\right)\overline{\Gamma^{M_{1},M_{2}}\left(\frac{B_{2}u}{2B_{1}}\right)}\left\{\sum_{k\in\mathbb{Z}}\left|\left(\mathcal{L}^{M_{2}}\left\{e^{\frac{it}{B_{1}}}\phi(t)\right\}\right)\left(\frac{B_{2}}{2B_{1}}\left(u+4k\pi|B_{1}|\right)\right)\right|^2\right\}\notag\\
&\hspace{13cm}\times K_{M_{1}}(m,u)\overline{K_{M_{1}}(n,u)}du\\
&=\frac{1}{2\pi|B_{2}|}2\pi|B_{2}|e^{\frac{2i}{B_{2}}(n-m)}\int_{0}^{4\pi|B_{1}|}\bigwedge^{M_{1},M_{2}}\left(\frac{B_{2}u}{2B_{1}}\right)\overline{\Gamma^{M_{1},M_{2}}\left(\frac{B_{2}u}{2B_{1}}\right)}K_{M_{1}}(n,u)\overline{K_{M_{1}}(m,u)}du\\
&=e^{\frac{2i}{B_{2}}(n-m)}\Bigg\{\int_{0}^{2\pi|B_{1}|}\bigwedge^{M_{1},M_{2}}\left(\frac{B_{2}u}{2B_{1}}\right)\overline{\Gamma^{M_{1},M_{2}}\left(\frac{B_{2}u}{2B_{1}}\right)}K_{M_{1}}(m,u)\overline{K_{M_{1}}(n,u)}du\\
&\hspace{7cm}+\int_{2\pi|B_{1}|}^{4\pi|B_{1}|}\bigwedge^{M_{1},M_{2}}\left(\frac{B_{2}u}{2B_{1}}\right)\overline{\Gamma^{M_{1},M_{2}}(\frac{B_{2}u}{2B_{1}})}K_{M_{1}}(n,u)\overline{K_{M_{1}}(m,u)}du\Bigg\}.
\end{align*}
Now
\begin{align*}
\int_{0}^{2\pi|B_{1}|}\bigwedge^{M_{1},M_{2}}\left(\frac{B_{2}u}{2B_{1}}\right)&\overline{\Gamma^{M_{1},M_{2}}\left(\frac{B_{2}u}{2B_{1}}\right)}K_{M_{1}}(n,u)\overline{K_{M_{1}}(m,u)}du\\
&=\frac{1}{\left(\sqrt{2\pi iB_{1}}\right)^2}e^{\frac{iA_{1}}{2B_{1}}\left(m^2-n^2\right)}\int_{0}^{2\pi|B_{1}|}\bigwedge^{M_{1},M_{2}}\left(\frac{B_{2}u}{2B_{1}}\right)\overline{\Gamma^{M_{1},M_{2}}\left(\frac{B_{2}u}{2B_{1}}\right)}e^{-\frac{iu}{B_{1}}(m-n)}du
\end{align*}
and 
\begin{align*}
\int_{2\pi|B_{1}|}^{4\pi|B_{1}|}&\bigwedge^{M_{1},M_{2}}\left(\frac{B_{2}u}{2B_{1}}\right)\overline{\Gamma^{M_{1},M_{2}}\left(\frac{B_{2}u}{2B_{1}}\right)}K_{M_{1}}(n,u)\overline{K_{M_{1}}(m,u)}du\\
&=\frac{1}{\left(\sqrt{2\pi iB_{1}}\right)^2}\int_{0}^{2\pi|B_{1}|}\bigwedge^{M_{1},M_{2}}\left(\frac{B_{2}}{2B_{1}}\left(u+2\pi|B_{1}|\right)\right)\overline{\Gamma^{M_{1},M_{2}}\left(\frac{B_{2}}{2B_{1}}\left(u+2\pi|B_{1}|\right)\right)}\\
&\hspace{7cm}\times K_{M_{1}}\left(n,(u+2\pi|B_{1}|)\right)\overline{K_{M_{1}}\left(m,(u+2\pi|B_{1}|)\right)}du\\
&=\frac{1}{\left(\sqrt{2\pi iB_{1}}\right)^2}\int_{0}^{2\pi|B_{1}|}\bigwedge^{M_{1},M_{2}}\left(\frac{B_{2}}{2B_{1}}\left(u+2\pi|B_{1}|\right)\right)\overline{\Gamma^{M_{1},M_{2}}\left(\frac{B_{2}}{2B_{1}}\left(u+2\pi|B_{1}|\right)\right)}e^{\frac{i}{2}\left\{\frac{A_{1}}{B_{1}}(m^2-n^2)-\frac{2u}{B_{1}}(m-n)\right\}}du\\
&=\frac{1}{\left(\sqrt{2\pi iB_{1}}\right)^2}e^{\frac{iA_{1}}{2B_{1}}(m^2-n^2)}\int_{0}^{2\pi|B_{1}|}\bigwedge^{M_{1},M_{2}}\left(\frac{B_{2}}{2B_{1}}\left(u+2\pi|B_{1}|\right)\right)\overline{\Gamma^{M_{1},M_{2}}\left(\frac{B_{2}}{2B_{1}}\left(u+2\pi|B_{1}|\right)\right)}e^{-\frac{iu}{B_{1}}(m-n)}du.
\end{align*}
Therefore,
\begin{align*}
&\left\langle\mathcal{L}^{M_{1}}\phi_{M_{1},M_{2},0,m},\mathcal{L}^{M_{1}}\psi_{M_{1},M_{2},0,n} \right\rangle_{L^2(\mathbb{R})}\\
&=\frac{1}{\left(\sqrt{2\pi iB_{1}}\right)^2}e^{\frac{2i}{B_{2}}(n-m)}e^{\frac{iA_{1}}{2B_{1}}(m^2-n^2)}\int_{0}^{2\pi|B_{1}|}\Bigg\{\bigwedge^{M_{1},M_{2}}\left(\frac{B_{2}u}{2B_{1}}\right)\overline{\Gamma^{M_{1},M_{2}}\left(\frac{B_{2}u}{2B_{1}}\right)}\\&\hspace{6cm}+\bigwedge^{M_{1},M_{2}}\left(\frac{B_{2}}{2B_{1}}\left(u+2\pi|B_{1}|\right)\right)\overline{\Gamma^{M_{1},M_{2}}\left(\frac{B_{2}}{2B_{1}}\left(u+2\pi|B_{1}|\right)\right)}\Bigg\}e^{-\frac{iu}{B_{1}}(m-n)}du.
\end{align*}
Since $\left\langle\mathcal{L}^{M_{1}}\phi_{M_{1},M_{2},0,m},\mathcal{L}^{M_{1}}\psi_{M_{1},M_{2},0,n}\right\rangle_{L^2(\mathbb{R})}=0,$ it follows that 
$$\int_{0}^{2\pi|B_{1}|}\left\{\bigwedge^{M_{1},M_{2}}\left(\frac{B_{2}u}{2B_{1}}\right)\overline{\Gamma^{M_{1},M_{2}}\left(\frac{B_{2}u}{2B_{1}}\right)}+\bigwedge^{M_{1},M_{2}}\left(\frac{B_{2}}{2B_{1}}\left(u+2\pi|B_{1}|\right)\right)\overline{\Gamma^{M_{1},M_{2}}\left(\frac{B_{2}}{2B_{1}}\left(u+2\pi|B_{1}|\right)\right)}\right\}e^{\frac{i(m-n)u}{B_{1}}}du=0.$$
Since $\left\{\frac{1}{\sqrt{2\pi}}e^{-in\left(\frac{u}{B_{1}}\right)}\right\}_{n\in\mathbb{Z}}$ is an ONB of $L^2[0,2\pi|B_{1}|],$ we have 
$$\bigwedge^{M_{1},M_{2}}\left(\frac{B_{2}u}{B_{1}}\right)\overline{\Gamma^{M_{1},M_{2}}\left(\frac{B_{2}u}{B_{1}}\right)}+\bigwedge^{M_{1},M_{2}}\left(\frac{B_{2}}{B_{1}}\left(u+2\pi|B_{1}|\right)\right)\overline{\Gamma^{M_{1},M_{2}}\left(\frac{B_{2}}{B_{1}}\left(u+2\pi|B_{1}|\right)\right)}=0,~u\in\mathbb{R}.$$
Thus in view of the above discussion we have the following important theorem.
\begin{theorem}
If  $\displaystyle\psi(t)=\sum_{n\in\mathbb{Z}}d_{n}\sqrt{2}e^{\frac{i}{2}\left\{\frac{n^2A_1}{4B_1}+\left(3t^2-4tn+n^2\right)\frac{A_2}{B_2}+\frac{2(t+n)}{B_1}\right\}},$ then $\{\psi_{M_{1},M_{2},0,n}\}_{n\in\mathbb{Z}}$ is an ONB of $W^{M_{1},M_{2}}_{0}$ if and only if
\begin{align}\label{P4IffforWavelet}
U\left(\frac{B_{2}u}{B_{1}}\right)U^{\star}\left(\frac{B_{2}u}{B_{1}}\right)=I,
\end{align}
where 
$I$ is the identity matrix and $U^{\star}\left(\frac{B_{2}u}{B_{1}}\right)$ denotes the conjugate transpose of the matrix $U\left(\frac{B_{2}u}{B_{1}}\right),$ where
$$U\left(\frac{B_{2}u}{B_{1}}\right)=
\begin{bmatrix}
\bigwedge^{M_{1},M_{2}}\left(\frac{B_{2}u}{B_{1}}\right) & \bigwedge^{M_{1},M_{2}}\left(\frac{B_{2}}{B_{1}}\left(u+\pi|B_{1}|\right)\right)\\
\Gamma^{M_{1},M_{2}}\left(\frac{B_{2}u}{B_{1}}\right)& \Gamma^{M_{1},M_{2}}\left(\frac{B_{2}}{B_{1}}\left(u+\pi|B_{1}|\right)\right)
\end{bmatrix}.$$
\end{theorem}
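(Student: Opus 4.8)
The plan is to unpack the single matrix equation \eqref{P4IffforWavelet} into scalar identities and match each one to a structural property of $\left\{\psi_{M_{1},M_{2},0,n}\right\}_{n\in\mathbb{Z}}$. Writing $w=\frac{B_{2}u}{B_{1}}$ and $w'=\frac{B_{2}}{B_{1}}\left(u+\pi|B_{1}|\right)$, the identity $U(w)U^{\star}(w)=I$ is, entry by entry, equivalent to
\begin{align*}
&\left|\bigwedge^{M_{1},M_{2}}(w)\right|^2+\left|\bigwedge^{M_{1},M_{2}}(w')\right|^2=1,\\
&\left|\Gamma^{M_{1},M_{2}}(w)\right|^2+\left|\Gamma^{M_{1},M_{2}}(w')\right|^2=1,\\
&\bigwedge^{M_{1},M_{2}}(w)\overline{\Gamma^{M_{1},M_{2}}(w)}+\bigwedge^{M_{1},M_{2}}(w')\overline{\Gamma^{M_{1},M_{2}}(w')}=0.
\end{align*}
The first of these was already obtained from the two--scale relation $\phi_{M_{1},M_{2},0,0}=\sum_{n}c_{n}\phi_{M_{1},M_{2},1,n}$ together with the fact that $\left\{\phi_{M_{1},M_{2},0,n}\right\}$ is an ONB of $V^{M_{1},M_{2}}_{0}$, so it holds for every admissible $\psi$; hence $U U^{\star}=I$ is equivalent to the $\Gamma$--identity (bottom--right entry) together with the cross identity (off--diagonal entry).

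For the forward implication I would assume $\left\{\psi_{M_{1},M_{2},0,n}\right\}$ is an ONB of $W^{M_{1},M_{2}}_{0}$. Feeding orthonormality through the same even/odd coset splitting that produced the $\bigwedge$--identity, now applied to the wavelet equation \eqref{P4WaveletEquation}, yields the $\Gamma$--identity. The inclusion $W^{M_{1},M_{2}}_{0}\perp V^{M_{1},M_{2}}_{0}$ gives $\left\langle\phi_{M_{1},M_{2},0,m},\psi_{M_{1},M_{2},0,n}\right\rangle=0$ for all $m,n$, which transported through $\mathcal{L}^{M_{1}}$ by \eqref{P4ParsevalLCT} and combined with the computation of $\left\langle\mathcal{L}^{M_{1}}\phi_{M_{1},M_{2},0,m},\mathcal{L}^{M_{1}}\psi_{M_{1},M_{2},0,n}\right\rangle$ carried out just above the statement becomes exactly the cross identity. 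Together with the unconditional first identity this gives $U U^{\star}=I$.

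For the converse I would assume $U U^{\star}=I$ and reverse the two scalar implications. Running the coset split backwards, the $\Gamma$--identity forces $\sum_{k}\left|\left(\mathcal{L}^{M_{2}}\left\{e^{\frac{it}{B_{1}}}\psi(t)\right\}\right)\left(\frac{B_{2}}{B_{1}}(u+2k\pi|B_{1}|)\right)\right|^2=\frac{1}{2\pi|B_{2}|}$, whence the Riesz--basis criterion with $A=B=\frac{1}{2\pi|B_{2}|}$ makes $\left\{\psi_{M_{1},M_{2},0,n}\right\}$ orthonormal; the cross identity reverses to $\left\{\psi_{M_{1},M_{2},0,n}\right\}\perp V^{M_{1},M_{2}}_{0}$, so the family lies in $W^{M_{1},M_{2}}_{0}$. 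For completeness I would use that for the square matrix $U$ one has $U U^{\star}=I\iff U^{\star}U=I$; the latter furnishes the perfect--reconstruction identities $\left|\bigwedge^{M_{1},M_{2}}(w)\right|^2+\left|\Gamma^{M_{1},M_{2}}(w)\right|^2=1$ and $\overline{\bigwedge^{M_{1},M_{2}}(w)}\bigwedge^{M_{1},M_{2}}(w')+\overline{\Gamma^{M_{1},M_{2}}(w)}\Gamma^{M_{1},M_{2}}(w')=0$. After the LCT and coset reduction, the coefficient matrix relating $\left\{\phi_{M_{1},M_{2},0,n}\right\}\cup\left\{\psi_{M_{1},M_{2},0,n}\right\}$ to the ONB $\left\{\phi_{M_{1},M_{2},1,k}\right\}$ of $V^{M_{1},M_{2}}_{1}$ is precisely $U(w)$, so its a.e.\ unitarity makes the combined family an ONB of $V^{M_{1},M_{2}}_{1}$. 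Since $\left\{\phi_{M_{1},M_{2},0,n}\right\}$ already spans $V^{M_{1},M_{2}}_{0}$ and $V^{M_{1},M_{2}}_{1}=V^{M_{1},M_{2}}_{0}\oplus W^{M_{1},M_{2}}_{0}$, the orthonormal family $\left\{\psi_{M_{1},M_{2},0,n}\right\}$ must exhaust $W^{M_{1},M_{2}}_{0}$.

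The main obstacle is exactly this completeness step. The orthonormality and the $V^{M_{1},M_{2}}_{0}$--orthogonality are essentially bookkeeping reversals of identities already derived in the run--up, but converting the paraunitarity $U^{\star}U=I$ into a genuine basis statement in $V^{M_{1},M_{2}}_{1}$ requires verifying that the transform--domain periodization identities encode the true $\ell^2$ inner products of the synthesis coefficients. This means carefully tracking the even/odd coset splitting of the frequency variable (the passage between the arguments $\frac{B_{2}u}{2B_{1}}$ and $\frac{B_{2}}{B_{1}}\left(\frac{u}{2}+\pi|B_{1}|\right)$) and the unimodular phase factors $e^{\frac{i}{2}(\cdots)}$ attached to the LCT kernel, so that the inverted expressions land on honest elements of $V^{M_{1},M_{2}}_{1}$ rather than on merely formal series.
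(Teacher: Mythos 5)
Your proposal is correct, and for the direction the paper actually argues it follows the same route: the paper's ``proof'' is the discussion preceding the theorem, which derives the three scalar identities you list --- the $\bigwedge^{M_{1},M_{2}}$--identity from the two--scale relation and the ONB property of $\{\phi_{M_{1},M_{2},0,n}\}_{n\in\mathbb{Z}}$, the $\Gamma^{M_{1},M_{2}}$--identity from the wavelet equation \eqref{P4WaveletEquation} by ``proceeding as above'' (which tacitly uses orthonormality of the $\psi$--translates via their periodization identity), and the cross identity from $W^{M_{1},M_{2}}_{0}\perp V^{M_{1},M_{2}}_{0}$ --- and these are exactly the entries of $U U^{\star}=I$, as you note. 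The substantive difference is that the paper stops there: the theorem is asserted merely ``in view of the above discussion,'' so only the necessity half of the equivalence is ever established. Your proposal additionally supplies sufficiency: orthonormality of $\{\psi_{M_{1},M_{2},0,n}\}_{n\in\mathbb{Z}}$ by reversing the coset--splitting computation and invoking the Riesz--basis criterion with $A=B=\frac{1}{2\pi|B_{2}|}$, orthogonality to $V^{M_{1},M_{2}}_{0}$ (hence membership in $W^{M_{1},M_{2}}_{0}$) by reversing the cross--identity computation, and completeness in $W^{M_{1},M_{2}}_{0}$ from the a.e.\ unitarity of $U$, which exhibits $\{\phi_{M_{1},M_{2},0,n}\}\cup\{\psi_{M_{1},M_{2},0,n}\}$ as an ONB of $V^{M_{1},M_{2}}_{1}$ relative to the ONB $\{\phi_{M_{1},M_{2},1,k}\}$. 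That is precisely what the ``if'' part of the statement needs and what the paper never proves; you also correctly flag this completeness step as the only place requiring genuinely new work beyond bookkeeping reversals, and your observation that the $\bigwedge^{M_{1},M_{2}}$--identity is unconditional (it depends only on the scaling function), so the real content of \eqref{P4IffforWavelet} sits in the second row of $U$, is a sharpening the paper does not make explicit. In short: your plan matches the paper where the paper has a proof, and fills the logical gap where it does not.
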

If \eqref{P4IffforWavelet} holds, then $\exists$ a function $\rho\left(\frac{B_{2}u}{B_{1}}\right)$ which satisfies
$$\left(\Gamma^{M_{1},M_{2}}\left(\frac{B_{2}u}{B_{1}}\right),\Gamma^{M_{1},M_{2}}\left(\frac{B_{2}}{B_{1}}\left(u+\pi|B_{1}|\right)\right)\right)=\left( \rho\left(\frac{B_{2}u}{B_{1}}\right)\overline{\bigwedge^{M_{1},M_{2}}\left(\frac{B_{2}}{B_{1}}\left(u+\pi|B_{1}|\right)\right)},-\rho\left(\frac{B_{2}u}{B_{1}}\right)\overline{\bigwedge^{M_{1},M_{2}}\left(\frac{B_{2}u}{B_{1}}\right)}\right).$$
Since 
$\rho\left(\frac{B_{2}u}{B_{1}}\right)$ is $2\pi|B_{1}|-$periodic function, therefore we have
\begin{align}\label{P4DiscussionEqn1}
\rho\left(\frac{B_{2}u}{B_{1}}\right)=\sum_{k\in\mathbb{Z}}c_{k}e^{-\frac{iku}{B_{1}}},
\end{align}
where
\begin{align}\label{P4DiscussionEqn2}
c_{k}&=\frac{1}{2\pi|B_{1}|}\int_{0}^{2\pi|B_{1}|}\rho\left(\frac{B_{2}u}{B_{1}}\right)e^{\frac{iku}{B_{1}}}du\notag\\
&=\frac{1}{2\pi|B_{1}|}\int_{0}^{\pi|B_{1}|}\rho\left(\frac{B_{2}u}{B_{1}}\right)e^{\frac{iku}{B_{1}}}du+\frac{1}{2\pi|B_{1}|}\int_{0}^{\pi|B_{1}|}\rho\left(\frac{B_{2}}{B_{1}}\left(u+\pi|B_{1}|\right)\right)e^{\frac{ik}{B_{1}}\left(u+\pi|B_{1}|\right)}du\notag\\
&=\frac{1}{2\pi|B_{1}|}\int_{0}^{\pi|B_{1}|}\rho\left(\frac{B_{2}u}{B_{1}}\right)e^{\frac{iku}{B_{1}}}du+(-1)^k\frac{1}{2\pi|B_{1}|}\int_{0}^{\pi|B_{1}|}\rho\left(\frac{B_{2}}{B_{1}}\left(u+\pi|B_{1}|\right)\right)e^{\frac{iku}{B_{1}}}du.
\end{align}
Since $\Gamma^{M_{1},M_{2}}\left(\frac{B_{2}u}{B_{1}}\right)$ is $2\pi|B_{1}|$ periodic, we have
\begin{align}\label{P4DiscussionEqn3}
\Gamma^{M_{1},M_{2}}\left(\frac{B_{2}}{B_{1}}\left(u+\pi|B_{1}|\right)\right)=\rho\left(\frac{B_{2}}{B_{1}}\left(u+\pi|B_{1}|\right)\right)\overline{\bigwedge^{M_{1},M_{2}}\left(\frac{B_{2}u}{B_{1}}\right)}.
\end{align}
Also,
\begin{align}\label{P4DiscussionEqn4}
\Gamma^{M_{1},M_{2}}\left(\frac{B_{2}}{B_{1}}\left(u+\pi|B_{1}|\right)\right)=\rho\left(\frac{B_{2}u}{B_{1}}\right)\overline{\bigwedge^{M_{1},M_{2}}\left(\frac{B_{2}u}{B_{1}}\right)}.
\end{align}
Therefore, from \eqref{P4DiscussionEqn3} and \eqref{P4DiscussionEqn4} we have
\begin{align}\label{P4DiscussionEqn5}
\rho\left(\frac{B_{2}}{B_{1}}\left(u+\pi|B_{1}|\right)\right)=-\rho\left(\frac{B_{2}u}{B_{1}}\right).
\end{align}
Using \eqref{P4DiscussionEqn5} in \eqref{P4DiscussionEqn2}, we have
$$c_{k}=\frac{1}{2\pi|B_{1}|}\{1-(-1)^k\}\int_{0}^{\pi|B_{1}|}\rho\left(\frac{B_{2}u}{B_{1}}\right)e^{\frac{iku}{B_{1}}}du.$$
This gives 
\begin{align}\label{P4DiscussionEqn6}
c_{k}=0,\forall k=2m,~m\in\mathbb{Z}.
\end{align}
Therefore, using \eqref{P4DiscussionEqn6} in \eqref{P4DiscussionEqn1} gives
\begin{align*}
\rho\left(\frac{B_{2}u}{B_{1}}\right)
&=\sum_{l\in\mathbb{Z}}c_{2l+1}e^{-\frac{i(2l+1)u}{B_{1}}}\\
&=e^{-\frac{iu}{B_{1}}}\gamma\left(\frac{2B_{2}u}{B_{1}}\right),~\mbox{where}~\gamma\left(\frac{B_{2}u}{B_{1}}\right)=\sum_{l\in\mathbb{Z}}c_{2l+1}e^{-\frac{ilu}{B_{1}}}.
\end{align*}
Moreover, using \eqref{P4Construction-WaveletEqn1}, we get
\begin{align}\label{P4Construction-WaveletEqn7}
\overline{\bigwedge^{M_{1},M_{2}}\left(\frac{B_{2}}{B_{1}}\left(u+\pi|B_{1}|\right)\right)}
&=\frac{\sqrt{2}}{2}\sum_{n\in\mathbb{Z}}\overline{c_{n}}e^{-\frac{i}{2}\left\{\frac{n^2A_{1}}{4B_{1}}+\frac{4n}{B_{1}}-\frac{2n}{B_{1}}\left(u+\pi|B_{1}|\right)\right\}}\notag\\
&=\frac{\sqrt{2}}{2}\sum_{n\in\mathbb{Z}}(-1)^n\overline{c_{n}}e^{-\frac{i}{2}\left(\frac{n^2A_{1}}{4B_{1}}+\frac{4n}{B_{1}}-\frac{2nu}{B_{1}}\right)}.
\end{align}
Also, we have
\begin{align}\label{P4Construction-WaveletEqn8}
\Gamma^{M_{1},M_{2}}\left(\frac{B_{2}u}{B_{1}}\right)=\frac{\sqrt{2}}{2}\sum_{n\in\mathbb{Z}}d_{n}e^{\frac{i}{2}\left(\frac{n^2A_{1}}{4B_{1}}+\frac{4n}{B_{1}}-\frac{2nu}{B_{1}}\right)}.
\end{align}
Now, 
\begin{align*}
\Gamma^{M_{1},M_{2}}\left(\frac{B_{2}u}{B_{1}}\right)
&=\rho\left(\frac{B_{2}u}{B_{1}}\right)\overline{\bigwedge^{M_{1},M_{2}}\left(\frac{B_{2}}{B_{1}}\left(u+\pi|B_{1}|\right)\right)}\\
&=e^{-\frac{iu}{B_{1}}}\gamma\left(\frac{2B_{2}u}{B_{1}}\right)\overline{\bigwedge^{M_{1},M_{2}}\left(\frac{B_{2}}{B_{1}}\left(u+\pi|B_{1}|\right)\right)}.
\end{align*}
Let $\gamma\left(\frac{2B_{2}u}{B_{1}}\right)=1,$ then 
\begin{align}\label{P4Construction-WaveletEqn9}
\Gamma^{M_{1},M_{2}}\left(\frac{B_{2}u}{B_{1}}\right)=e^{-\frac{iu}{B_{1}}}\frac{\sqrt{2}}{2}\sum_{n\in\mathbb{Z}}(-1)^n\overline{c_{n}}e^{-\frac{i}{2}\left(\frac{n^2A_{1}}{4B_{1}}+\frac{4n}{B_{1}}-\frac{2nu}{B_{1}}\right)}.
\end{align}
From \eqref{P4Construction-WaveletEqn8} and \eqref{P4Construction-WaveletEqn9}, we get
$$\sum_{n\in\mathbb{Z}}d_{n}e^{\frac{i}{2}\left(\frac{n^2A_{1}}{4B_{1}}+\frac{4n}{B_{1}}-\frac{2nu}{B_{1}}\right)}=e^{-\frac{iu}{B_{1}}}\sum_{n\in\mathbb{Z}}(-1)^n\overline{c_{n}}e^{-\frac{i}{2}\left(\frac{n^2A_{1}}{4B_{1}}+\frac{4n}{B_{1}}-\frac{2nu}{B_{1}}\right)}.$$
Thus, we have
\begin{align*}
d_{k}e^{\frac{i}{2}\left(\frac{k^2A_{1}}{4B_{1}}+\frac{4k}{B_{1}}\right)}=\sum_{n\in\mathbb{Z}}(-1)^n\overline{c_{n}}e^{-\frac{i}{2}\left(\frac{n^2A_{1}}{4B_{1}}+\frac{4n}{B_{1}}\right)}\delta_{1-k-n}
\end{align*}
i.e.,
\begin{align*}
d_{k}e^{\frac{i}{2}\left(\frac{k^2A_{1}}{4B_{1}}+\frac{4k}{B_{1}}\right)}=(-1)^{1-k}\overline{c_{1-k}}e^{-\frac{i}{2}\left\{\frac{(1-k)^2A_{1}}{4B_{1}}+\frac{4(1-k)}{B_{1}}\right\}}.
\end{align*}
Thus, it follows that 
$$d_{k}=(-1)^{1-k}\overline{c_{1-k}}e^{-\frac{i}{2}\left\{\frac{(1-k)^2A_{1}}{4B_{1}}+\frac{k^2A_{1}}{4B_{1}}+\frac{4}{B_1}\right\}},~k\in\mathbb{Z}.$$

\section{Conclusions}
We have proposed a novel time-frequency analyzing tool, namely LCST, which combines the advantages of the LCT and the ST and offers time and linear canonical domain spectral information simultaneously in the time-LCT-frequency plane. The proposed transform is more general then that given in \cite{shah2020linear}. We see that our transform gives rise to some novel integral transform like novel FrST and novel Fresnel transform. We have studied its properties like inner product relation, reconstruction formula and also characterized its range via which we have obtained that the range of the transform is a reproducing kernel Hilbert space. We have also developed an MRA associated with the transform and discussed the construction of orthonormal basis of $L^2(\mathbb{R})$.  
\bibliography{P4MasterB4}
\bibliographystyle{plain}
\end{document}